\newcommand{\qed}{\hfill \rule{2.5mm}{2.5mm}}
\newcommand{\R}{{\mathbb R}}
\newcommand{\C}{{\mathbb C}}
\newcommand{\tvect}[2]{\ensuremath{\negthinspace\begin{pmatrix}#1 \\ #2 \end{pmatrix}}}
\newcommand{\N}{{\mathbb N}}
\newcommand{\Z}{{\mathbb Z}}
\newcommand{\sgn}{{\rm sgn}\,}
\newcommand{\trace}{{\rm trace}\,}
\renewcommand{\tvect}[2]{\left(\begin{array}{cc}{ #1}\\ { #2}\end{array}\right)}
\newcommand{\proof}{{\em Proof:\ }}
\begin{document}
\newtheorem{thm}{Theorem}[section]
\newtheorem{defs}[thm]{Definition}
\newtheorem{lem}[thm]{Lemma}
\newtheorem{cor}[thm]{Corollary}
\newtheorem{prop}[thm]{Proposition}
\renewcommand{\theequation}{\arabic{section}.\arabic{equation}}
\newcommand{\newsection}[1]{\setcounter{equation}{0} \section{#1}}
%%%%%%%%%%%%% title %%%%%%%%%%%%%%%%%%%%%%%%%%%%%%%
\title{Canonical systems in $\R^2$ with periodic potentials 
     and vanishing instability intervals
      \footnote{{\bf Keywords:} canonical systems; inverse problems; periodic eigenvalue problem; Hill's equation.\
      {\em Mathematics subject classification (2010):} 34B30, 34A55, 34L05.}}
%%%%%%%%%%%%%%%%%%%%%%%%%%%%%%%%%%%%%%%%%%%%%%
\author{
Sonja Currie \footnote{Supported in part by the Centre for Applicable Analysis and
Number Theory and by NRF grant number IFR2011040100017.},
Thomas T. Roth, 
Bruce A. Watson \footnote{Supported in part by the Centre for Applicable Analysis and
Number Theory and by NRF grant number IFR2011032400120.} \\
School of Mathematics\\
University of the Witwatersrand\\
Private Bag 3, P O WITS 2050, South Africa }
\maketitle
%%%%%%%%%%%%%%%%%%%% abstract %%%%%%%%%%%%%%%%%%%%%%
\abstract{\noindent
 Canonical systems in $\R^2$ with absolutely continuous real symmetric
 $\pi$-periodic potentials matrices are considered.
 A through analysis of the discriminant is given along with the indexing and interlacing of the
 eigenvalues of the periodic, anti-periodic and Dirichlet-type boundary value problems on $[0,\pi]$.
 The periodic and anti-periodic eigenvalues are characterized in terms of Dirichlet type eigenvalues.
 It is shown that all instability intervals vanish if and only if the potential is 
 the product of an absolutely continuous real valued function with the identity matrix.
}
%%%%%%%%%%%%%%%%%%%%%%%%%%%%%%%%%%%%%%%%%%%%%%%
\parindent=0in
\parskip=.2in
%%%%%%%%%%%%%%% introduction %%%%%%%%%%%%%%%%%%%%%%%%%%
\newsection{Introduction\label{sec-intro}}

Borg showed in \cite{BG},  that  
a single spectrum is not sufficient to uniquely 
 determine the potential of a Sturm-Liouville problem. 
However he also showed that the spectra of 
two Sturm-Liouville problems with the same potential, $q$, 
but with one of the boundary conditions changed are sufficient to determine $q$ uniquely.
It should be noted that he allowed non-separated boundary conditions and considered Hill equations. 
Following the work of Borg, the study of inverse spectral problems developed rapidly, see
 \cite{BML} and \cite{JRM} for surveys.

Hochstadt, \cite{HH1}, considered Sturm-Liouville equations on a finite interval
with periodic or anti-periodic boundary conditions. 
He showed that if each eigenvalue 
was of multiplicity $2$ then the potential was uniquely determined as the zero potential. 
To prove this, the Sturm-Liouville problem was extended by periodicity
and the related Hill's equation studied. 
Here the analytic structure of the discriminant played a central role. 
An up to date survey of this area as well as of periodic 1-Dimensional Dirac problems
can be found in Brown, Eastham and Schmidt, \cite[pages 1-29]{BES}.
Classical results on the Hill's equation can be found in Magnus and Winkler, \cite{MW},
and on the 1-dimensional Dirac equation in Levitan and Sargsjan, \cite{LaS}.

The 1-dimensional Dirac equation arises from separation of variables in
relativistic quantum mechanics while the more general 2-dimensional canonical
system arises in classical mechanics, see for example \cite{livsic}.
The development of the theory of the 1-dimensional Dirac equation and 2-dimensional 
canonical system occurred slower, see Sargasjan and Levitan \cite{LaS}, than that of the
Sturm-Liouville equation.  For example Ambarzumyan-type theorems for Dirac operators
appeared from 1987 through 2012, \cite{MH, BML, KM, MyPu, Pu, BAW, CLW, YY}.
 Despite the parallels between Sturm-Liouville equations and canonical systems,
there are important differences: 
\begin{description}
 \item[(i)] The operators associated with canonical systems in $\R^2$ are not lower-semi-bounded,
 thus the simple variational arguments used in Sturm-Liouville theory, 
 cannot be applied directly.
\item[(ii)] canonical systems which are equivalent through a unitary transformation are
 spectrally indistinguishable, which complicates uniqueness for inverse problems. 
\item[(iii)] Oscillation theory for canonical systems is significantly more complicated
than the Sturm theory for Sturm-Liouville equations, see \cite{GL}, \cite[pages 201-207]{LaS},
\cite{JW}, \cite{Yak}.
\end{description}
In spite of (iii), intersections of  solutions $Y(z)$ with a 1-dimensional
subspace of $\R^2$ can be compared, see Teschl \cite{GT2}, and the interlacing of eigenvalues established.  We provide, for the reader's
convenience, the specific oscillation and interlacing results needed for the inverse problem.

The main theorem of this paper is Theorem \ref{main-thm} in which we consider a canonical system
 in $\R^2$ with real symmetric absolutely continuous $\pi$-periodic matrix potential.
We prove that if all instability intervals are empty, then the matrix potential is diagonal
with the two diagonal entries equal, analogous results for Hill's equation can be found in 
\cite[pages 94-111]{BES} and \cite{HH1}.

In Section 2 we give some preliminary results  on translation of the potential and the
consequential changes in the solutions to (\ref{MainDifferEqn}).  
The characteristic determinant and its properties are studied in Section 3.
The eigenvalues of the periodic and anti-periodic problems are characterized in terms of the
eigenvalues of shifted version of the Dirichlet problem (where possible) in Section 4.
The necessary asymptotic estimates are developed in Section 5. 
Finally real symmetric matrix potentials, $Q$ with absolutely continuous $\pi$-periodic
 entries, for which all instability intervals of (\ref{MainDifferEqn}) vanish,
are characterized, in Section 6, as being of the form
 $Q=qI$ where $q$ is a real (scalar) valued $\pi$-periodic absolutely continuous function.

%%%%%%%%%%% Preliminaries %%%%%%%%%%%%%%%%%%%%%%%%%%%%%
\newsection{Preliminaries\label{sec-prelim}}

Consider
\begin{eqnarray}
	\ell Y = JY' + QY,
\end{eqnarray}
where 
\begin{eqnarray*}
	J = \left( \begin{array}{cc}
0 &  1\\
-1 & 0 \end{array} \right) \quad \mbox{ and } \quad Q = \left( \begin{array}{cc}q_1 &  q\\q & q_2 \end{array} \right)
\end{eqnarray*}
in which the components of $Q(z)$ are real valued $\pi$-periodic functions on $\R$, integrable on $(0,\pi)$.
We are interested in the eigenvalue problem
\begin{eqnarray}\label{MainDifferEqn}
	\ell Y = \lambda Y 
\end{eqnarray} 
 on $[0,\pi]$, with the periodic and anti-periodic 
boundary conditions, respectively
\begin{eqnarray*}
		Y(0) &= Y(\pi),\qquad   &(BC_1) \label{per}\\  % BC_1
	Y(0) &= -Y(\pi).\qquad   &(BC_2) \label{antper} % BC_2
\end{eqnarray*}
Denote by $\mathbb{Y}$ the matrix solution of (\ref{MainDifferEqn}) 
obeying the initial condition
\begin{eqnarray}\label{initialConds}
	 [ Y_1(0) \quad Y_2(0) ] = I,
\end{eqnarray}
where $I$ is the identity matrix, and write $\mathbb{Y}=[Y_1 \quad Y_2]$.

The above boundary value problems can also be posed in the Hilbert space
$\mathbb{H} = \mathcal{L}_2(0,\pi)\times\mathcal{L}_2(0,\pi)$ with inner product
\begin{eqnarray*}
\langle Y,Z\rangle = \int_0^\pi Y(t)^T \overline{Z}(t) dt \quad \mbox{ for } Y,Z \in \mathbb{H},
\end{eqnarray*}
and norm $\| Y \|^2_2 := \langle Y,Y \rangle$.
The above boundary eigenvalue problem can be represented by the operator eigenvalue problems
\begin{eqnarray} \label{eValProb}
	L_iY = \lambda Y, \qquad  \qquad i = 1,2,
\end{eqnarray}
where $L_i = \ell|_{\mathcal{D}(L_i)}$ with domain
\begin{eqnarray}
   \mathcal{D}(L_i)=\left\{ Y=\tvect{y_1}{y_2} \,:\,y_1,y_2 \in \mbox{AC}, \ell Y \in\mathbb{H},\mbox{Y obeys } (BC_i) \right\}.
\end{eqnarray}
In addition to the operators $L_1$ and $L_2$ we define $L_3$ and $L_4$ as above but 
with the boundary conditions 
\begin{eqnarray}
	y_1(0) &=& y_1(\pi)=0,\qquad  (BC_3)\label{1Dir}\\
	y_2(0) &=& y_2(\pi)=0.\qquad (BC_4) \label{2Dir}
\end{eqnarray}
As the operators $L_j, j=1,2,3,4,$ are self-adjoint, their eigenvalues are real.  Hence we will
restrict our attention to $\lambda\in\R$.
%%%%%%%%%% Preliminaries %%%%%%%%%%%%%%%%%%%%%%%%%%%%%
\newsection{The Characteristic Determinant}

We now show that there is a, possibly multivalued, function $\rho(\lambda)$ so that for each
 $\lambda$ there is a nontrivial solution $Y$ of
 (\ref{MainDifferEqn}) on $\mathbb{R}$ with
\begin{eqnarray}\label{floquetEqn}
	Y(z + \pi,\lambda) = \rho(\lambda) Y(z,\lambda),\quad\mbox{ for all }\quad z\in \R.
\end{eqnarray}
As $\mathbb{Y}(z + \pi,\lambda)$ is a solution matrix of (\ref{MainDifferEqn})
and $\mathbb{Y}(z,\lambda)$ is a fundamental matrix of (\ref{MainDifferEqn}), 
$\mathbb{Y}(z + \pi,\lambda)$ can be written as  
\begin{eqnarray}\label{floquetTransform}
	\mathbb{Y}(z+\pi,\lambda) = \mathbb{Y}(z,\lambda)A(\lambda),
\end{eqnarray}
where $A(\lambda)$ is independent of $z$. Setting $z=0$ gives 
$A(\lambda) = \mathbb{Y}(\pi,\lambda)$.
Combining this with (\ref{floquetEqn}) and (\ref{floquetTransform}) gives that $\rho(\lambda)$ 
represents the values of $\rho$ for which
\begin{eqnarray} \label{charaEqn}
	 \mathbb{Y}(z,\lambda)(\rho I - A(\lambda))\underline{c} = 0
\end{eqnarray}
for some $\underline{c}\ne 0$, i.e. the values of $\rho(\lambda)$ are the 
eigenvalues of $A(\lambda)$. Thus the values of $\rho(\lambda)$ are the roots, $\rho$,
 of the characteristic equation
\begin{eqnarray}\label{DiracCharacteristic}
	\rho^2 - \rho\Delta(\lambda) + 1 = \det(A(\lambda) - I\rho)= 0.
\end{eqnarray}
Here
\begin{eqnarray} \label{discriminant}
	\Delta(\lambda): = y_{11}(\pi,\lambda) + y_{22}(\pi,\lambda) = \trace(A(\lambda)),
\end{eqnarray}
 is called the discriminant of (\ref{MainDifferEqn}).
In terms
of the $\Delta(\lambda)$, from (\ref{DiracCharacteristic}), $\rho(\lambda)$ is given by
\begin{eqnarray}\label{characterSolution}
	\rho(\lambda) = \frac{\Delta(\lambda) \pm \sqrt{\Delta^2(\lambda) - 4}}{2}.
\end{eqnarray}

As $Q(x)=\overline{Q(x)}$, it follows that $\Delta(\lambda)$ is real for
$\lambda\in\R$.
In this case, if $|\Delta(\lambda)|>2$ then there are two linearly independent solutions of 
(\ref{MainDifferEqn}) obeying (\ref{floquetEqn}).  Here $\rho(\lambda)$ is real and 
at least one of these has
$|\rho(\lambda)|>1$, in which case the solution has exponential growth as 
$z\to\infty$, so the solutions are unstable for such $\lambda$.
If $\lambda$ is real and $|\Delta(\lambda)|\le 2$ then there are two linearly independent solutions of 
(\ref{MainDifferEqn}) obeying (\ref{floquetEqn}) both of which have
$|\rho(\lambda)|=1$, thus making all solutions bounded for $z\in\R$ giving stability of the solution 
for such $\lambda$.
The $\lambda$-intervals on the real line for which all solutions are bounded
will be called the intervals of stability while
the intervals for which at least one solutions is unbounded
will be called instability intervals. 
The stability intervals are given by $|\Delta(\lambda)|\leq 2$ while the 
instability intervals are given by $|\Delta(\lambda)|> 2$.  It follows from  
Corollary~\ref{baw-boundary} that the instability intervals are precisely the components 
of the interior of the set on which $|\Delta(\lambda)|\ge 2, \lambda\in\R$.

The following lemma shows that $\Delta(\lambda)$ is independent of replacement of $Q(z)$ by
 $Q(z+\tau)$, that is $\Delta(\lambda)$ is independent of shifts of the independent variable
 in the potential.
This lemma is critical in our study of the inverse problem.

\begin{lem}\label{invarDiscrim}
	Let $\Delta(\lambda,\tau)$ denote the discriminant of 
	\begin{eqnarray}\label{translEval}
		JU'(z) + [Q(z + \tau) - \lambda I]U(z) = 0,
	\end{eqnarray}
	for $\tau\in\R$, then  $\Delta(\lambda,\tau)$ is independent of $\tau$.
\end{lem}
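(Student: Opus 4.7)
The plan is to express the monodromy matrix of the shifted equation as a conjugate of the monodromy matrix of the original equation, and then conclude by the cyclic invariance of the trace.

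First, I would observe that if $\mathbb{Y}(z,\lambda)$ is the fundamental matrix of (\ref{MainDifferEqn}) normalized by $\mathbb{Y}(0,\lambda)=I$, then $z\mapsto \mathbb{Y}(z+\tau,\lambda)$ is a matrix solution of the shifted equation (\ref{translEval}), since differentiating $\mathbb{Y}(\cdot+\tau,\lambda)$ merely relabels the independent variable and the potential $Q$ becomes $Q(\cdot+\tau)$. This matrix solution has initial value $\mathbb{Y}(\tau,\lambda)$ at $z=0$ rather than $I$. The fundamental matrix $\mathbb{U}(z,\lambda,\tau)$ of (\ref{translEval}) normalized by $\mathbb{U}(0,\lambda,\tau)=I$ is therefore
\begin{eqnarray*}
\mathbb{U}(z,\lambda,\tau) = \mathbb{Y}(z+\tau,\lambda)\,\mathbb{Y}(\tau,\lambda)^{-1},
\end{eqnarray*}
where invertibility of $\mathbb{Y}(\tau,\lambda)$ follows from the fact that $\mathbb{Y}(\cdot,\lambda)$ is a fundamental matrix. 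By definition the monodromy matrix of (\ref{translEval}) is
\begin{eqnarray*}
A(\lambda,\tau) = \mathbb{U}(\pi,\lambda,\tau) = \mathbb{Y}(\pi+\tau,\lambda)\,\mathbb{Y}(\tau,\lambda)^{-1}.
\end{eqnarray*}

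Next I would apply the Floquet relation (\ref{floquetTransform}) at $z=\tau$ to get $\mathbb{Y}(\pi+\tau,\lambda)=\mathbb{Y}(\tau,\lambda)A(\lambda)$, so
\begin{eqnarray*}
A(\lambda,\tau) = \mathbb{Y}(\tau,\lambda)\,A(\lambda)\,\mathbb{Y}(\tau,\lambda)^{-1}.
\end{eqnarray*}
Taking traces and using cyclic invariance,
\begin{eqnarray*}
\Delta(\lambda,\tau)=\trace A(\lambda,\tau)=\trace A(\lambda)=\Delta(\lambda),
\end{eqnarray*}
which is independent of $\tau$, as required.

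The main (and essentially the only) subtlety is recognizing that $Q$ enters (\ref{translEval}) as $Q(z+\tau)$ so that the natural candidate for the fundamental solution is $\mathbb{Y}(z+\tau,\lambda)$, which forces the renormalization by $\mathbb{Y}(\tau,\lambda)^{-1}$ on the right; once this conjugation structure is seen, the rest is just the trace identity. No periodicity of $Q$ beyond what is already used to define the monodromy is required for the algebraic step, although $\pi$-periodicity is of course what makes $A(\lambda,\tau)$ the correct monodromy of the shifted problem for a $\pi$-period.
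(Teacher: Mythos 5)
Your proof is correct, and it takes a genuinely different route from the paper's. The paper proceeds by differentiation: it writes $\mathbb{U}(z,\tau)=\mathbb{Y}(z+\tau)B(\tau)$ with $B(\tau)^{-1}=\mathbb{Y}(\tau)$, differentiates this relation with respect to $\tau$ and $z$, derives a first-order ODE in $\tau$ for $\mathbb{U}(\pi,\tau)$, and then uses the $\pi$-periodicity $Q(\tau)=Q(\pi+\tau)$ to show that the trace of $\partial\mathbb{U}(\pi,\tau)/\partial\tau$ vanishes identically. Your argument instead goes straight for the algebraic structure: identifying the shifted monodromy as the conjugate $A(\lambda,\tau)=\mathbb{Y}(\tau,\lambda)A(\lambda)\mathbb{Y}(\tau,\lambda)^{-1}$ and invoking cyclic invariance of the trace. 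This is shorter and more transparent, and it additionally shows that the full spectrum of the monodromy matrix (not just its trace) is $\tau$-independent, which the paper's trace-level computation does not make visible. One small caution on your closing remark: the Floquet relation $\mathbb{Y}(z+\pi,\lambda)=\mathbb{Y}(z,\lambda)A(\lambda)$ that you apply at $z=\tau$ does itself rely on the $\pi$-periodicity of $Q$ (it is what makes $\mathbb{Y}(\cdot+\pi,\lambda)$ a solution of the same equation), so periodicity is in fact used essentially in your argument, just as it is in the paper's cancellation step; it is not merely a matter of "defining the monodromy."
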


\begin{proof}
Let $U_1(z,\tau) = \tvect{u_{11}(z,\tau)}{u_{12}(z,\tau)}$ and
 $U_2(z,\tau) = \tvect{u_{21}(z,\tau)}{u_{22}(z,\tau)}$ be the solutions of (\ref{translEval}) which satisfy the initial conditions
\begin{eqnarray}
	[U_1 \, U_2](0,\tau) = I.\label{UiCondit}
\end{eqnarray}
Let $\mathbb{U}=[U_1 \, U_2]$.
Since $Y_1(z + \tau)$ and $Y_2(z + \tau)$ are solutions of (\ref{translEval}) and a basis for the solution set of (\ref{translEval}), 
we may represent $U_1(z,\tau)$ and $U_2(z,\tau)$ as a linear combination of $Y_1(z + \tau)$ and $Y_2(z + \tau)$ giving
\begin{eqnarray}\label{TransZandT}
	\mathbb{U}(z,\tau) = \mathbb{Y}(z+\tau)B(\tau),
\end{eqnarray}
where $B(\tau)$ is an invertible matrix. Inverting $B(\tau)$ and setting $z = 0$ we obtain 
\begin{eqnarray}
 B^{-1}(\tau) = \mathbb{Y}(\tau).\label{B-inverse}
\end{eqnarray}
By (\ref{discriminant}), the discriminant of problem (\ref{translEval}) is 
\begin{eqnarray}
	\Delta(\lambda,\tau) = u_{11}(\pi,\tau) + u_{22}(\pi,\tau).
\end{eqnarray} 
Combining (\ref{TransZandT}) and (\ref{B-inverse}) we get
 $$\mathbb{U}(z,\tau)\mathbb{Y}(\tau) = \mathbb{Y}(z + \tau)$$
 which when differentiated with respect to $\tau$ and $z$ gives 
\begin{eqnarray*}
	\frac{\partial \mathbb{U}(z,\tau)}{\partial \tau}B^{-1}(\tau) 
              &=& \mathbb{Y}'(z + \tau) -  \mathbb{U}(z,\tau)
	\frac{\partial}{\partial \tau}B^{-1}(\tau),   \\
	\frac{\partial\mathbb{U}(z,\tau)}{\partial z}B^{-1}(\tau) &=& \mathbb{Y}'(z + \tau). 
\end{eqnarray*}
Taking the difference of the above two equations and premultiplying by $B(\tau)$ we obtain
\begin{eqnarray*}
	\frac{\partial\mathbb{U}(z,\tau)}{\partial \tau}
    + \mathbb{U}(z,\tau)\frac{\partial\mathbb{Y}(\tau)}{\partial \tau}B(\tau) =
	 \frac{\partial\mathbb{U}(z,\tau)}{\partial z}.
\end{eqnarray*}
Now the above equation with (\ref{translEval}) and (\ref{eValProb}) yields
\begin{eqnarray}\label{UDerivTau}
   \frac{\partial\mathbb{U}(z,\tau)}{\partial \tau}
   = J(Q(z+\tau) - \lambda I)\mathbb{U}(z,\tau)
    - 	\mathbb{U}(z,\tau)J(Q(\tau) - \lambda I).
\end{eqnarray}
A direct calculation shows that
\begin{eqnarray}
  \lefteqn{\trace\{J(Q(z+\tau)-\lambda I)\mathbb{U}(z,\tau)\}\nonumber}\\
   &=& q(z+\tau)(u_{11}(z,\tau)-u_{22}(z,\tau)) + u_{12}(z,\tau)(q_2(z+\tau)-\lambda)\nonumber\\
   && - u_{21}(z,\tau)(q_1(z+\tau)-\lambda),
  \label{trace-1}\\
  \lefteqn{\trace\{\mathbb{U}(z,\tau)J(Q(\tau)-\lambda I)\}\nonumber}\\
   &=& q(\tau)(u_{11}(z,\tau)-u_{22}(z,\tau)) + u_{12}(z,\tau)(q_2(\tau)-\lambda)\nonumber\\
    && - u_{21}(z,\tau)(q_1(\tau)-\lambda).
 \label{trace-2}
\end{eqnarray}
Since $Q(\tau)=Q(\pi+\tau)$, setting $z=\pi$ in (\ref{UDerivTau}), (\ref{trace-1}) and (\ref{trace-2}) gives
\begin{eqnarray*}
  \Delta_{\tau} =  \frac{\partial}{\partial \tau}\trace\mathbb{U}(\pi,\tau) = \trace\frac{d\mathbb{U}(\pi,\tau)}{d \tau} =0.
\end{eqnarray*}
Hence $\Delta$ is independent of $\tau$, and  
$\Delta(\lambda,\tau) =\Delta(\lambda,0) = \Delta(\lambda)$
 for all $\tau\in\R$ and $\lambda\in\C$. \qed
\end{proof}

\begin{lem}\label{baw-delta}
{\bf (a)}
 The $\lambda$-derivative of $\Delta$ is given by
\begin{eqnarray}\label{DerivDeltaLambda1}
	\frac{d \Delta}{d \lambda} = y_{21}(\pi)\int^\pi_0 Y^T_1 Y_1 dt + (y_{22}(\pi)-y_{11}(\pi))\int^\pi_0 Y^T_1 Y_2 dt
	- y_{12}(\pi)\int^\pi_0 Y^T_2 Y_2 dt.
\end{eqnarray}
which can also be expressed as
\begin{eqnarray}
	\frac{d \Delta}{d \lambda} 
       = y_{12}(\pi) \left\{\frac{\Delta^2 - 4}{4y^2_{12}(\pi)} \|Y_1\|^2_2 - \left\| Y_2 -
	 \frac{y_{22}(\pi) -y_{11}(\pi)}{2y_{12}(\pi)}Y_1\right\|^2_2\right\},
         & y_{12}(\pi)\ne 0,&\label{d-der-1}\\
	\frac{d \Delta}{d \lambda} 
       = y_{21}(\pi) \left\{
 \left\| Y_1+ \frac{y_{22}(\pi) -y_{11}(\pi)}{2y_{21}(\pi)}Y_2\right\|^2_2
 -\frac{\Delta^2 - 4}{4y^2_{21}(\pi)} \|Y_2\|^2_2\right\},
         & y_{21}(\pi)\ne 0.&\label{d-der-2}
\end{eqnarray}
{\bf (b)}
    If $\Delta(\lambda)=\pm 2$ and $\frac{d\Delta}{d\lambda}(\lambda)=0$ 
     then $y_{12}(\pi)=0=y_{21}(\pi)$ and $\mp \frac{d^2\Delta}{d\lambda^2}(\lambda)>0$.
 \\
{\bf (c)}
    If $|\Delta|\le 2$ then
     \begin{eqnarray}
      \frac{1}{y_{12}(\pi)}\frac{d \Delta}{d \lambda}&<&0,\quad\mbox{for}\quad y_{12}(\pi)\ne 0,
       \label{b-1}\\
      \frac{1}{y_{21}(\pi)}\frac{d \Delta}{d \lambda}&>&0, \quad\mbox{for}\quad y_{21}(\pi)\ne 0.
       \label{b-2}
     \end{eqnarray}
 \\
{\bf (d)} If $y_{12}(\pi)=0$ or $y_{21}(\pi)=0$, then $\Delta\cdot\sgn y_{11}(\pi)\ge 2$.
\end{lem}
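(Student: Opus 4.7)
The plan is to exploit the fact that $\mathbb{Y}$ has constant determinant equal to $1$. Rewriting (\ref{MainDifferEqn}) in first-order form as $Y'=J(Q-\lambda I)Y$ (using $J^{-1}=-J$), one checks directly that the coefficient matrix $J(Q-\lambda I)$ has trace $q-q=0$. Liouville's formula then gives $\det\mathbb{Y}(z)=\det\mathbb{Y}(0)=1$ for all $z$, so in particular
\begin{eqnarray*}
y_{11}(\pi)y_{22}(\pi)-y_{12}(\pi)y_{21}(\pi)=1.
\end{eqnarray*}

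Now suppose $y_{12}(\pi)=0$ or $y_{21}(\pi)=0$. Then the relation above collapses to $y_{11}(\pi)y_{22}(\pi)=1$. In particular $y_{11}(\pi)\ne 0$ (otherwise the product would be zero, contradicting $\det A=1$, so the sign function in the statement is well-defined), and $y_{22}(\pi)=1/y_{11}(\pi)$.

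Substituting into the definition (\ref{discriminant}) of the discriminant gives
\begin{eqnarray*}
\Delta(\lambda)=y_{11}(\pi)+\frac{1}{y_{11}(\pi)}.
\end{eqnarray*}
The elementary inequality $t+1/t\ge 2$ for $t>0$ and $t+1/t\le -2$ for $t<0$ (both instances of AM-GM applied to $|t|$ and $1/|t|$) then yields $\Delta\cdot\sgn y_{11}(\pi)\ge 2$, as required. There is no substantive obstacle here: the entire content is the unimodularity of $\mathbb{Y}(\pi)$, which is the standard canonical-systems analogue of the Wronskian identity for Sturm--Liouville problems.
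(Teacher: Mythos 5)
Your argument correctly establishes part (d) of the lemma, and it does so by essentially the same route as the paper: $\det\mathbb{Y}(\pi)=1$ (from Liouville's formula, since $J(Q-\lambda I)$ is trace-free), whence $y_{11}(\pi)y_{22}(\pi)=1$ when an off-diagonal entry vanishes, so $\Delta=y_{11}(\pi)+1/y_{11}(\pi)$ and the elementary inequality $t+1/t\geq 2$ for $t>0$ (resp. $\leq -2$ for $t<0$) finishes. Your explicit check that $y_{11}(\pi)\neq 0$ and that $\sgn y_{11}(\pi)$ is therefore well-defined is a small improvement over the paper, which leaves it implicit.

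However, the proposal addresses only part (d) of a four-part lemma; parts (a), (b) and (c) are not touched, and your closing claim that ``the entire content is the unimodularity of $\mathbb{Y}(\pi)$'' is not correct for those parts. Part (a) is the substantive computation: one must differentiate the initial value problem in $\lambda$, solve the resulting inhomogeneous system by variation of parameters to get $\partial_\lambda\mathbb{Y}(x) = -\int_0^x \mathbb{Y}(x)\mathbb{Y}^{-1}(t)J\mathbb{Y}(t)\,dt$, evaluate the trace at $x=\pi$ to obtain (\ref{DerivDeltaLambda1}), and then complete the square (using $\det\mathbb{Y}(\pi)=1$) to obtain the two alternative forms (\ref{d-der-1})--(\ref{d-der-2}). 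Part (c) follows immediately from those square-completed forms once $\Delta^2-4\leq 0$. Part (b) additionally requires a second differentiation in $\lambda$, the observation that $\mathbb{Y}(\pi)=\pm I$ under the hypotheses, a Fubini argument to collapse the resulting double integral, and the Cauchy--Schwarz inequality together with the linear independence of $Y_1$ and $Y_2$ to conclude strict negativity of $\pm\tfrac{1}{2}\Delta''$. None of this is implied by, or reducible to, the Wronskian identity alone, so the proof as written is incomplete.
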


\proof {\bf (a)}
Taking the $\lambda$-derivative of $\mathbb{Y}$ in (\ref{eValProb}) and (\ref{initialConds}),
we obtain that $\mathbb{Y}_\lambda$ obeys the non-homogeneous initial value problem
\begin{eqnarray*}
	J\mathbb{Y}'_{\lambda}
          + (Q-\lambda I ) \mathbb{Y}_\lambda=\mathbb{Y},
\end{eqnarray*}
with the initial condition $\mathbb{Y}_\lambda(0) = {\bf 0}$.
The homogeneous equation $JY'_{\lambda} +(Q-\lambda I) Y_\lambda=0$ has the fundamental matrix solution 
$\mathbb{Y}$.
 Using the method of variation of parameters, see \cite[pp. 74]{CodaLev}, we obtain 
\begin{eqnarray}\label{Y1L1}
	\frac{\partial \mathbb{Y}(x)}{\partial \lambda}
 = -\int_0^x \mathbb{Y}(x)\mathbb{Y}^{-1}(t)J\mathbb{Y}(t)\, dt.
\end{eqnarray}
Using (\ref{Y1L1}), the $\lambda$-derivative of the discriminant (\ref{discriminant}) can rewritten as
\begin{eqnarray}\label{DerivDeltaLambda12}
	\frac{d \Delta}{d \lambda} = y_{21}(\pi)\int^\pi_0 Y^T_1 Y_1 dt + (y_{22}(\pi)-y_{11}(\pi))\int^\pi_0 Y^T_1 Y_2 dt
	- y_{12}(\pi)\int^\pi_0 Y^T_2 Y_2 dt.
\end{eqnarray}
Completing the square in the (\ref{DerivDeltaLambda12}) and using that the Wronskian 
$\det \mathbb{Y}=1$ with the definition of $\Delta$ we obtain the remaining forms for
the $\lambda$-derivative of $\Delta$.

{\bf (b)}
If $\Delta=\pm 2$ and $\frac{d\Delta}{d\lambda}=0$ then as $Y_1$ and $Y_2$ are linearly independent in $L^2(0,\pi)$, 
(\ref{d-der-1}) leads to a contradiction if $y_{12}(\pi)\ne 0$ and (\ref{d-der-2}) leads to a contradiction if
$y_{21}(\pi)\ne 0$. Thus $y_{12}(\pi)=0=y_{21}(\pi)$.

As $[y_{11}y_{22}-y_{12}y_{21}](\pi)=1$, 
it now follows that $\Delta=y_{11}(\pi)+\frac{1}{y_{11}(\pi)}$.
The function $f(t)=t+(1/t)$ on $\R\backslash\{0\}$ attains the value $2$ only at $t=1$ and the value $-2$ only at $t=-1$.
Thus $y_{11}(\pi)=y_{22}(\pi)=\pm 1$ and $\mathbb{Y}(\pi)=\pm I$.

Taking the $\lambda$-derivative of $\mathbb{Y}_\lambda$ in (\ref{eValProb}) gives
\begin{eqnarray}\label{firstInitValProb1}
	J\mathbb{Y}'_{\lambda\lambda} + (Q-\lambda I)\mathbb{Y}_{\lambda\lambda} 
= 2\mathbb{Y}_{\lambda},
\end{eqnarray}
and we obtain that $\mathbb{Y}_{\lambda\lambda}$ obeys the initial condition 
$\mathbb{Y}_{\lambda\lambda}(0)=0$.
Using the method of variation of parameters as in (\ref{Y1L1}) gives
\begin{eqnarray}
	\frac{1}{2}\frac{\partial^2 \mathbb{Y}}{\partial\lambda^2}(x)
       = \int_0^x \int_0^t 
\mathbb{Y}(x)\mathbb{Y}^{-1}(t)J\mathbb{Y}(t)\mathbb{Y}^{-1}(\tau)J
\mathbb{Y}(\tau) \,d\tau\,dt,
\label{Lam2Soln1}
\end{eqnarray}
which with $x=\pi$ and $\mathbb{Y}(\pi)=\pm I$ yields
\begin{eqnarray}
	\frac{1}{2}\frac{\partial^2 \mathbb{Y}}{\partial\lambda^2}(\pi)
       = \pm \int_0^\pi 
\mathbb{Y}^{-1}(t)J\mathbb{Y}(t)
 \int_0^t 
\mathbb{Y}^{-1}(\tau)J \mathbb{Y}(\tau) \,d\tau\,dt.
\label{Lam2Soln12}
\end{eqnarray}
Here
$$\mathbb{Y}^{-1}J\mathbb{Y}=\left[\begin{array}{cc} Y_2^TY_1 & Y_2^TY_2 \\ -Y_1^TY_1 & -Y_1^TY_2 \end{array}\right]$$
giving
\begin{eqnarray*}
	\frac{\pm 1}{2}\frac{d^2 \Delta }{d\lambda^2}(\lambda)
	&=&\frac{\pm 1}{2}{\rm trace}\left(\frac{\partial^2 \mathbb{Y}}{\partial\lambda^2}(\pi)
          \right)\\
       &=&
     -\int_0^\pi Y^T_2Y_2\int_0^xY_1^TY_1\,dt\,dx + 
2\int_0^\pi Y_2^TY_1\int_0^xY_2^TY_1\,dt\,dx\\
       && -  \int_0^\pi Y^T_1Y_1\int_0^xY_2^TY_2\,dt\,dx.
\end{eqnarray*}
As $Y_1, Y_2$ have real entries for $\lambda\in\R$, by Fubini's Theorem applied to 
the above double integrals we obtain
\begin{eqnarray*}
	\frac{\pm 1}{2}\frac{d^2 \Delta }{d\lambda^2}(\lambda)
       &=&
     -\int_0^\pi Y^T_2Y_2\,dt\,\int_0^\pi Y_1^TY_1\,dt + 
\left(\int_0^\pi Y_2^TY_1\,dt\right)^2\\
	&=& -\|Y_1\|^2_2\|Y_2\|^2_2 + \langle Y_1,Y_2\rangle^2 < 0,
\end{eqnarray*}
for $\lambda\in\R$. Now H\"older's inequality gives that
$Y_1$ and $Y_2$ are linearly independent.

{\bf (c)} If $|\Delta|\le 2$ then $\Delta^2-4\le 0$ so (\ref{d-der-1}) and (\ref{d-der-2})
  respectively yield (\ref{b-1}) and (\ref{b-2}). 

{\bf (d)} If $y_{12}(\pi)=0$ or $y_{21}(\pi)=0$ then as ${\rm det}{\mathbb Y}(\pi)=1$,
  it follows that $y_{11}(\pi)y_{22}(\pi)=1$ giving $\Delta=y_{11}(\pi)+\frac{1}{y_{11}(\pi)}$ so
  $\Delta\ge 2$ if $y_{11}(\pi)>0$ and $\Delta\le -2$ if $y_{11}(\pi)<0$.
\qed

\begin{cor}\label{baw-boundary}
 For $\lambda\in\R$, the function $|\Delta(\lambda)|$ attains the value $2$ only on the boundary 
of the set $\Gamma=\{\lambda\in\R\,|\,|\Delta(\lambda)|\ge 2\}$.  
\end{cor}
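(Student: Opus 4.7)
The plan is to take an arbitrary $\lambda_0\in\R$ with $|\Delta(\lambda_0)|=2$ and show $\lambda_0\in\partial\Gamma$. Because $\Delta$ is continuous in $\lambda$, the set $\Gamma$ is closed, so $\lambda_0\in\Gamma$ automatically, and it suffices to produce points arbitrarily close to $\lambda_0$ at which $|\Delta|<2$.

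I would split into two cases according to whether $\frac{d\Delta}{d\lambda}(\lambda_0)$ vanishes. First, if $\frac{d\Delta}{d\lambda}(\lambda_0)\ne 0$, then $\Delta$ is strictly monotone on some neighbourhood of $\lambda_0$, so $\Delta(\lambda)-\Delta(\lambda_0)$ changes sign across $\lambda_0$. On the side where this difference has sign opposite to $\Delta(\lambda_0)=\pm 2$, continuity gives $|\Delta(\lambda)|<2$ for $\lambda$ sufficiently close to $\lambda_0$, so every neighbourhood of $\lambda_0$ meets $\R\setminus\Gamma$.

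Second, if $\frac{d\Delta}{d\lambda}(\lambda_0)=0$, I would invoke Lemma~\ref{baw-delta}(b): it forces $y_{12}(\pi)=y_{21}(\pi)=0$ and, crucially, $\frac{d^2\Delta}{d\lambda^2}(\lambda_0)$ to have the sign opposite to $\Delta(\lambda_0)$. A second-order Taylor expansion then yields, for all $\lambda$ in a deleted neighbourhood of $\lambda_0$, a strict inequality $\Delta(\lambda)<\Delta(\lambda_0)=2$ in the $+2$ case (respectively $\Delta(\lambda)>\Delta(\lambda_0)=-2$ in the $-2$ case), so $|\Delta(\lambda)|<2$ on that deleted neighbourhood. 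Thus $\lambda_0$ is an isolated point of $\Gamma$, which is a fortiori a boundary point.

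There is no real obstacle here beyond packaging: all the analytic content has been carried by Lemma~\ref{baw-delta}(b), whose proof used the non-degenerate quadratic form structure of $\frac{d^2\Delta}{d\lambda^2}$ together with H\"older's inequality. The corollary itself is a clean first/second-derivative test dichotomy, and one can combine the two cases into a single short paragraph once the reader is reminded that $\Gamma$ is closed.
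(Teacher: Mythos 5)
Your proof is correct and rests on exactly the same key ingredient as the paper's proof, namely Lemma~\ref{baw-delta}(b), but it is packaged differently. The paper argues by contradiction: assuming $\lambda\in{\rm int}(\Gamma)$ with $\Delta(\lambda)=\pm 2$, it notes that on a connected interval inside $\Gamma$ containing $\lambda$, continuity forces $\pm\Delta\ge 2$ throughout, so $\lambda$ is a local minimum of $\pm\Delta$ and hence $\Delta'(\lambda)=0$; Lemma~\ref{baw-delta}(b) then gives $\pm\Delta''(\lambda)<0$, and continuity of $\Delta''$ (from analyticity of $\Delta$) forces $\pm\Delta<2$ on a deleted neighbourhood, contradicting $\pm\Delta\ge 2$ there. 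You instead argue directly, splitting on whether $\Delta'(\lambda_0)$ vanishes: in the non-degenerate case the first-order sign change already supplies nearby points with $|\Delta|<2$, and only the degenerate case calls on Lemma~\ref{baw-delta}(b) and a Taylor expansion. Both arguments are sound; what the paper's framing buys is that the interiority assumption collapses the problem to the single degenerate case from the outset (so there is no case split), whereas your two-case direct version is a bit more explicit and may be more transparent for readers who dislike proofs by contradiction. The only point worth making carefully in your Case~1 and Case~2, which you correctly gesture at, is that producing $\lambda$ near $\lambda_0$ with, say, $\Delta(\lambda)<2$ is not by itself $|\Delta(\lambda)|<2$; one also needs $\Delta(\lambda)>-2$, which follows from continuity since $\Delta(\lambda_0)=2$, and symmetrically in the $-2$ case.
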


\proof
 Suppose that $\lambda\in{\rm int}(\Gamma)$ and $\Delta(\lambda) = \pm 2$.
 As $\lambda\in{\rm int}(\Gamma)$ there is $\delta>0$ so that
  $I:=(\lambda-\delta,\lambda+\delta)\subset \Gamma$.
  The continuity of $\Delta$ and  connectedness of $I$ give that 
  $\pm \Delta\ge 2$ on $I$. Hence $\pm\Delta$ attains a local minimum at $\lambda$.
  Thus $\Delta'(\lambda)=0$. 
  Lemma \ref{baw-delta}(b) can now be applied to give
  $\pm\Delta''(\lambda)<0$. From the analyticity of $\Delta$,  $\Delta''$ is continuous, making
  $\pm\Delta''<0$ on a neighbourhood, say $N$, of $\lambda$.
  Hence $\pm\Delta<2$ on $N\backslash\{\lambda\}$, which contradicts $\pm\Delta\ge 2$ on $I$.
\qed
%%%%%%%%%%% Characterisation %%%%%%%%%%%%%%%%%%%%%%%%%%
\newsection{Eigenvalues}

Let $\Psi(z) = \left(\begin{array}{c}\psi_1(z) \\ \psi_2(z)\end{array}\right)$ be the
 non-trivial solution
 of (\ref{MainDifferEqn}) satisfying the initial condition
$\left( \begin{array}{c} \psi_1(0)\\ \psi_2(0) \end{array} \right) = 
 \left( \begin{array}{c} \cos\gamma\\ \sin\gamma \end{array} \right)$ where $\gamma \in [0,\pi)$. 
Define $R(z,\lambda,\gamma)$ and $\theta(z,\lambda,\gamma)$ by
\begin{eqnarray}
      \Psi(z)  = \left( \begin{array}{c}
      R(z,\lambda,\gamma)\cos\theta(z,\lambda,\gamma) \\
      R(z,\lambda,\gamma)\sin\theta(z,\lambda,\gamma)  \end{array} \right),
\end{eqnarray}
where $R(z,\lambda,\gamma)>0$ and $\theta(z,\lambda,\gamma)$
 is a continuous function of $z$ with
$\theta(0,\lambda,\gamma) = \gamma$.
From now on $\theta$ will be referred to as the angular part of $\Psi$.
The function $R(z,\lambda,\gamma)$ is differentiable in $z, \lambda, \gamma,$
and $\theta(z,\lambda,\gamma)$ is analytic in $\lambda$ and $\gamma$ for fixed $z$, and
differentiable in $z$ for fixed $\lambda$ and $\gamma$.
Here $\theta(z,\lambda,\gamma)$ is the solution to a first order initial value problem
\begin{eqnarray}
   \theta'&=&\lambda -q\sin 2\theta -q_{1}\cos^2\theta-q_{2}\sin^2\theta,\label{theta-eq}\\
   \theta(0)&=&\gamma.\label{theta-ic}
\end{eqnarray}
This initial value problem
obeys the conditions of \cite[Section 69.1]{mcshane}, from which it follows that 
$\theta(z,\lambda,\gamma)$ is jointly continuous in $(z,\lambda,\gamma)$.
Moreover, for fixed $z>0$ and $\gamma$, $\theta(z,\lambda,\gamma)$ is strictly increasing in 
$\lambda, \lambda\in\R$, see Weidmann \cite[p. 242]{JW}, 
with $\theta(z,\lambda,\gamma)\to\pm\infty$
as $\lambda\to\pm\infty$, see \cite{BV}.  
Thus the eigenvalues,  $\nu_n, n\in\Z,$ and $\mu_n, n\in \Z$,
of $L_3$ and $L_4$, respectively, are simple and determined uniquely by the equations
\begin{eqnarray}
  \theta(\pi,\nu_n,\pi/2)&=&n\pi+\frac{\pi}{2},\quad n\in\Z,\label{def-nu}\\
  \theta(\pi,\mu_n,0)&=&n\pi,\quad n\in\Z.\label{def-mu}
\end{eqnarray}
As a consequence of the above observation it follows that
$\mu_n, \nu_n, \to \pm\infty$ as $n\to\pm\infty$.

\begin{lem}\label{stability}
\begin{itemize}
\item[(a)] For each $n\in\Z$, 
\begin{eqnarray}
\max\{\mu_n,\nu_n\}<\min\{\mu_{n+1},\nu_{n+1}\}.\label{stability-1}
\end{eqnarray}
\item[(b)] If
$\lambda\in (\min\{\nu_n,\mu_n\},\max\{\nu_{n+1},\mu_{n+1}\})$ and
$|\Delta(\lambda)|\le 2$ then $$(-1)^n\Delta'(\lambda)<0.$$
\item[(c)]
The set $|\Delta(\lambda)|\ge 2$ consists of a countable union of disjoint closed finite intervals,
each of which contains precisely one of the sets $\{\nu_n, \mu_n\}, n\in\Z$. 
The end points of these intervals as the only points at which $|\Delta(\lambda)|= 2$. 
\end{itemize}
\end{lem}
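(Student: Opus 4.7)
The plan is to prove (a), (b), (c) in order, building on the angular variable $\theta(z,\lambda,\gamma)$ of (\ref{theta-eq}) and on Lemma~\ref{baw-delta}.

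For (a), the right-hand side of (\ref{theta-eq}) is $\pi$-periodic in $\theta$, so uniqueness for the scalar ODE gives $\theta(z,\lambda,\gamma+\pi)=\theta(z,\lambda,\gamma)+\pi$; non-crossing of trajectories then yields $\theta(z,\lambda,0)<\theta(z,\lambda,\pi/2)<\theta(z,\lambda,0)+\pi$ for $z>0$. Evaluating at $z=\pi$ and $\lambda=\mu_{n+1}$ gives $\theta(\pi,\mu_{n+1},\pi/2)>(n+1)\pi>n\pi+\pi/2$, so strict $\lambda$-monotonicity of $\theta(\pi,\cdot,\pi/2)$ forces $\nu_n<\mu_{n+1}$; symmetrically, $\mu_n<\nu_{n+1}$. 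Combined with the evident $\mu_n<\mu_{n+1}$ and $\nu_n<\nu_{n+1}$, this gives (\ref{stability-1}).

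For (b), I would exploit the identities $y_{21}(\pi,\lambda)=R(\pi)\sin\theta(\pi,\lambda,0)$ and $y_{12}(\pi,\lambda)=R(\pi)\cos\theta(\pi,\lambda,\pi/2)$. On $(\mu_j,\mu_{j+1})$ the image of $\theta(\pi,\cdot,0)$ lies in $(j\pi,(j+1)\pi)$, so $\sgn y_{21}(\pi)=(-1)^j$; analogously $\sgn y_{12}(\pi)=(-1)^{j+1}$ on $(\nu_j,\nu_{j+1})$. At a $\lambda$ interior to the interval of (b) with $|\Delta(\lambda)|\le 2$, part (a) rules out simultaneous vanishing of $y_{12}(\pi)$ and $y_{21}(\pi)$ (any such coincidence $\mu_k=\nu_k$ is confined by (a) to the boundary points $\max\{\mu_n,\nu_n\}$ and $\min\{\mu_{n+1},\nu_{n+1}\}$), so one of (\ref{b-1}), (\ref{b-2}) of Lemma~\ref{baw-delta}(c) applies. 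A case-split on the relative orderings of $\mu_n,\nu_n$ and of $\mu_{n+1},\nu_{n+1}$ then shows that the sub-interval-dependent sign of the surviving $y_{12}(\pi)$ or $y_{21}(\pi)$ combines with (\ref{b-1})/(\ref{b-2}) to give $(-1)^n\Delta'(\lambda)<0$.

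For (c), I would first show $[\min\{\mu_n,\nu_n\},\max\{\mu_n,\nu_n\}]\subset\Gamma:=\{|\Delta|\ge 2\}$. At $\mu_n$ one has $y_{21}(\pi)=0$ and $y_{11}(\pi)=(-1)^nR(\pi)$, so $\det\mathbb{Y}(\pi)=1$ forces $\Delta(\mu_n)=y_{11}+1/y_{11}$ with $\sgn\Delta(\mu_n)=(-1)^n$ and $|\Delta(\mu_n)|\ge 2$; the same holds at $\nu_n$. Assuming $\mu_n<\nu_n$, if some $\lambda^*\in(\mu_n,\nu_n)$ had $|\Delta(\lambda^*)|<2$, then $\lambda^*$ would lie in both the (b)-interval at index $n-1$ (yielding $(-1)^n\Delta'(\lambda^*)>0$) and at index $n$ (yielding $(-1)^n\Delta'(\lambda^*)<0$), a contradiction; hence $\{\mu_n,\nu_n\}$ is contained in a single component $\Gamma_n$ of $\Gamma$. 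Since $(-1)^n\Delta\ge 2$ at $\max\{\mu_n,\nu_n\}$ and $(-1)^n\Delta\le-2$ at $\min\{\mu_{n+1},\nu_{n+1}\}$, the intermediate value theorem produces $\lambda^\dagger$ in between with $|\Delta(\lambda^\dagger)|<2$, so $\Gamma_n\cap\Gamma_{n+1}=\emptyset$; strict monotonicity of $(-1)^n\Delta$ on stability sub-intervals (from (b)) then forces this intervening stability interval to be unique. Corollary~\ref{baw-boundary} finally identifies the endpoints of the $\Gamma_n$ as exactly the points where $|\Delta|=2$. The main obstacle is the bookkeeping in (b) at the interior points of its interval where one of $y_{12}(\pi),y_{21}(\pi)$ vanishes: there the sign of $\Delta'$ must be deduced from whichever of (\ref{b-1}), (\ref{b-2}) applies to the non-vanishing quantity, and one must verify that every case yields $(-1)^n\Delta'<0$.
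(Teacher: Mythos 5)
Your proposal follows the same broad line as the paper's: (a) via $\gamma$- and $\lambda$-monotonicity of $\theta(\pi,\cdot,\cdot)$; (b) by turning Pr\"ufer-angle information into sign information on the off-diagonal entries of $\mathbb{Y}(\pi)$ and feeding that into Lemma~\ref{baw-delta}(c); (c) by confining each pair $\{\mu_n,\nu_n\}$ to a single component of $\{|\Delta|\ge 2\}$, separating consecutive components by the intermediate value theorem, and appealing to Corollary~\ref{baw-boundary} for the boundary claim. Parts (a) and (c) are sound; indeed in (c) your ``two overlapping (b)-intervals'' contradiction is a little cleaner than the paper's, which picks a least element $\lambda^\dagger$ and invokes (b) on all of $[\lambda^*,\lambda^\dagger]$ before it is immediate that $|\Delta|\le 2$ holds throughout that interval. (You should, however, assume $|\Delta(\lambda^*)|\le 2$ rather than $<2$, since (b) covers equality and interior points with $|\Delta|=2$ must also be excluded.)

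The one genuine issue is an index-convention mismatch in (b). You set $y_{21}(\pi)=R(\pi)\sin\theta(\pi,\cdot,0)$ and $y_{12}(\pi)=R(\pi)\cos\theta(\pi,\cdot,\pi/2)$, i.e.\ you read $y_{ij}$ as the $(i,j)$ entry of $\mathbb{Y}$. The paper, though, writes $U_1=(u_{11},u_{12})^T$, $U_2=(u_{21},u_{22})^T$ for the columns of $\mathbb{U}$, hence $Y_1=(y_{11},y_{12})^T$, $Y_2=(y_{21},y_{22})^T$; this is confirmed by the line ``$y_{21}(\pi,\nu_n)=0$'' in its proof of (c), which is the vanishing of the first component of $Y_2$ at $\pi$. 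In the paper's convention, then, $y_{12}(\pi)=R(\pi,\lambda,0)\sin\theta(\pi,\lambda,0)$ and $y_{21}(\pi)=R(\pi,\lambda,\pi/2)\cos\theta(\pi,\lambda,\pi/2)$ --- the opposite of your identification. Since (\ref{b-1}) and (\ref{b-2}) of Lemma~\ref{baw-delta} are stated with the paper's $y_{12}$ and $y_{21}$, substituting your labels literally reverses the sign and yields $(-1)^n\Delta'(\lambda)>0$. The correct bookkeeping: on $(\nu_n,\nu_{n+1})$ the first component of $Y_2(\pi)$ has sign $(-1)^{n+1}$ and is nonzero; that is the paper's $y_{21}(\pi)$, and (\ref{b-2}) gives $\sgn\Delta'=\sgn y_{21}(\pi)=(-1)^{n+1}$, so $(-1)^n\Delta'<0$. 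On $(\mu_n,\mu_{n+1})$ the second component of $Y_1(\pi)$ has sign $(-1)^n$ and is nonzero; that is the paper's $y_{12}(\pi)$, and (\ref{b-1}) gives $\sgn\Delta'=-\sgn y_{12}(\pi)=(-1)^{n+1}$, again $(-1)^n\Delta'<0$. By (\ref{stability-1}) the two intervals cover $(\min\{\nu_n,\mu_n\},\max\{\nu_{n+1},\mu_{n+1}\})$, completing (b). Once you align your $y_{ij}$ labels with Lemma~\ref{baw-delta}'s, the proposal is correct and is essentially the paper's argument.
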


\begin{proof}
{\bf (a)}
For fixed $\lambda$, $\theta(\pi,\lambda,\gamma)$ is monotonic increasing in $\gamma$
(this follows from the fact that $\theta$ is a solution to a first order differential equation
which has a unique solution for each initial value - giving that if a solution $\theta_1$ begins
below $\theta_2$ then it remains below $\theta_2$ for all values of the independent variable).
Thus
$$\theta(\pi,\mu_n,\pi/2)<\theta(\pi,\mu_n,\pi)=(n+1)\pi<(n+1)\pi+\frac{\pi}{2}
=\theta(\pi,\nu_{n+1},\pi/2),$$
which, as $\theta(\pi,\lambda,\pi/2)$ is increasing in $\lambda$, gives $\mu_n<\nu_{n+1}$.
As $\theta(\pi,\lambda,0)$ is increasing in $\lambda$, $\nu_n<\nu_{n+1}$.  Combining these
inequalities gives $\max\{\mu_n,\nu_n\}<\nu_{n+1}$.
Similarly 
$$\theta(\pi,\nu_n,0)<\theta(\pi,\nu_n,\pi/2)=n\pi+\frac{\pi}{2}<(n+1)\pi
=\theta(\pi,\mu_{n+1},0)$$
giving $\nu_n<\mu_{n+1}$ and $\mu_n<\mu_{n+1}$ giving 
$\max\{\mu_n,\nu_n\}<\mu_{n+1}$. Hence (\ref{stability-1}) follows.

{\bf (b)} From the monotinicity of $\theta(\pi,\lambda,\pi/2)$ in $\lambda$,
for $\lambda\in (\nu_n,\nu_{n+1})$,
$$n\pi+\frac{\pi}{2}=\theta(\pi,\nu_n,\pi/2)
   <\theta(\pi,\lambda,\pi/2)
   <\theta(\pi,\nu_{n+1},\pi/2)=(n+1)\pi+\frac{\pi}{2},$$
giving
\begin{eqnarray}
 (-1)^n y_{21}(\pi,\lambda)=(-1)^n R(\pi,\lambda,\pi/2)\cos\theta(\pi,\lambda,\pi/2)<0.
  \label{2014-sign-a}
\end{eqnarray}

Similarly, for $\lambda\in (\mu_n,\mu_{n+1})$,
$$n\pi=\theta(\pi,\mu_n,0)
   <\theta(\pi,\lambda,0)
   <\theta(\pi,\mu_{n+1},0)=(n+1)\pi,$$
giving
\begin{eqnarray}
 (-1)^n y_{12}(\pi,\lambda)=(-1)^n R(\pi,\lambda,0)\sin\theta(\pi,\lambda,0)>0.
\label{2014-sign-b}
\end{eqnarray}

From (\ref{stability-1}) we have that $(\nu_n,\nu_{n+1})\cap (\mu_n,\mu_{n+1})\ne \phi$ and
thus
$$(\nu_n,\nu_{n+1})\cup (\mu_n,\mu_{n+1})=
(\min\{\nu_n,\mu_n\},\max\{\nu_{n+1},\mu_{n+1}\}).$$
Now by Lemma \ref{baw-delta}(c) along with   (\ref{2014-sign-a}) and (\ref{2014-sign-b}),
if 
$$\lambda\in (\min\{\nu_n,\mu_n\},\max\{\nu_{n+1},\mu_{n+1}\})\quad\mbox{and}\quad
|\Delta(\lambda)|\le 2$$ then $(-1)^n\Delta'(\lambda)<0$.

{\bf (c)}
Since $|\Delta(\lambda)|$ is continuous,
the set of $\lambda\in\R$ for which
 $|\Delta(\lambda)|\ge 2$ consists of a countable union of disjoint closed finite intervals.
From the definition of $\nu_n$, we have $y_{21}(\pi,\nu_n)=0$ and 
$y_{22}(\pi,\nu_n)=(-1)^nR(\pi,\nu_n,\pi/2)$.
Hence $y_{11}(\pi,\nu_n)=(-1)^n/R(\pi,\nu_n,\pi/2)$ and
 $(-1)^n\Delta(\nu_n)\ge 2$.
Similarly $y_{11}(\pi,\mu_n)=(-1)^nR(\pi,\mu_n,0)$ and 
$y_{12}(\pi,\mu_n)=0$. Hence
$y_{22}(\pi,\mu_n)=(-1)^n/R(\pi,\mu_n,0)$ and 
 $(-1)^n\Delta(\mu_n)\ge 2$. 
Hence, for each $n\in \Z$,
\begin{eqnarray}
 \min\{(-1)^n\Delta(\min\{\nu_n,\mu_n\}), (-1)^n\Delta(\max\{\nu_n,\mu_n\})\}\ge 2. 
 \label{min-max}
\end{eqnarray}
Let
$$S:=\{\lambda|(-1)^n\Delta(\lambda)< 2\}\cap (\min\{\nu_n,\mu_n\},\max\{\nu_{n},\mu_{n}\}).$$
If $S\ne\emptyset$ then there is  $\lambda^*\in S$.
Here $K:=(-1)^n\Delta(\lambda^*)<2$ and by (\ref{min-max}),
  $(-1)^n\Delta(\max\{\nu_n,\mu_n\})\ge 2$.
So from the intermediate value theorem there is $\lambda$ with
$\lambda^*\le\lambda\le\max\{\nu_n,\mu_n\}$ having $(-1)^n\Delta(\lambda)=(2+K)/2$.
The set of such $\lambda$ is compact and thus has a least element, say $\lambda^\dagger$.
By part (b) of this lemma $(-1)^n\Delta'(\lambda)<0$
for all $\lambda^*\le \lambda\le \lambda^\dagger$ giving the contradiction
$$K=(-1)^n\Delta(\lambda^*)\ge (-1)^n\Delta(\lambda^\dagger)=(2+K)/2.$$
Thus $S=\emptyset$ and for each $n\in\Z$ both $\mu_n$ and $\nu_n$
lie in the same component of $\{\lambda | |\Delta(\lambda)|\ge 2\}$. 
Due to the sign alternation in (\ref{min-max}) as $n$ changes, 
each component of 
$\{\lambda\in\R| |\Delta(\lambda)|\ge 2\}$
contains at most one pair $\{\mu_n,\nu_n\}, n\in\Z$.

It remains to show that every component of $\{\lambda\in\R| |\Delta(\lambda)|\ge 2\}$
contains $\mu_n$ for some $n\in\Z$.
 If not then there is a component, say $T$,
of  $\{\lambda\in\R| |\Delta(\lambda)|\ge 2\}$ and $n\in\Z$ so that 
$T\subset (\mu_n,\mu_{n+1})$. 
Let $[\tilde{\lambda}_{-1},\tilde{\lambda}_0]$ and
$[\tilde{\lambda}_{3},\tilde{\lambda}_4]$ denote the components of 
 $\{\lambda\in\R| |\Delta(\lambda)|\ge 2\}$ containing
$\mu_n$ and $\mu_{n+1}$ respectively.
The set $T:=[\tilde{\lambda}_1,\tilde{\lambda}_2]$ is compact and we may,
 without loss of generality,
assume that $\tilde{\lambda}_1$ is the least $\lambda>\tilde{\lambda}_0$ 
with $|\Delta(\lambda)|\ge 2$.
Here 
$\tilde{\lambda}_0<\tilde{\lambda}_1\le \tilde{\lambda}_2<\tilde{\lambda}_{3}$. 
From (\ref{def-mu}) we have $(-1)^n\Delta(\mu_n)\ge 2$, however,
from part (b) of this lemma, $(-1)^n\Delta'(\lambda)<0$ for 
$\lambda\in (\tilde{\lambda}_0,\tilde{\lambda}_1)$. 
Thus $\Delta(\lambda)\le -2$ for $\lambda\in T$.
Again, as $(-1)^n\Delta'(\lambda)<0$ for 
$\lambda\in (\mu_n,\mu_{n+1})\backslash T$, $\Delta(\lambda)\le -2$ for all
$\lambda\in[\tilde{\lambda_1},\mu_{n+1}]$. Hence $T$ contains $\mu_{n+1}$,
contradicting the definition of $T$, and giving that no such $T$ exists.

The last part of the claim follows directly from 
Corollary \ref{baw-boundary}.
\qed
\end{proof}

 We denote the components (maximal connected subsets) of the set
$\{\lambda\in\R| \Delta(\lambda)\ge 2\}$ by $[\lambda_{2k-1},\lambda_{2k}]$ indexed so that
$\{\mu_{2k},\nu_{2k}\}\subset [\lambda_{2k-1},\lambda_{2k}]$ 
(this indexing is possible and uniquely defined by the previous lemma).
Similarly we denote the components of the set
$\Delta(\lambda)\le -2$ by $[\lambda_{2k-1}',\lambda_{2k}']$, labeled so that
$\{\mu_{2k-1},\nu_{2k-1}\}\subset [\lambda_{2k-1}',\lambda_{2k}']$.
With this indexing
\begin{eqnarray}\label{indexing}
\lambda'_{2k-1}\le\{\mu_{2k-1},\nu_{2k-1}\}
 \le \lambda'_{2k}<\lambda_{2k-1}\le
\{ \mu_{2k},\nu_{2k}\}\le \lambda_{2k}<\lambda'_{2k+1}.\label{InterlaceEvalues}
\end{eqnarray}
 Here by $\lambda'_{2k-1}\le\{\mu_{2k-1},\nu_{2k-1}\}
 \le \lambda'_{2k}$ we mean that both $\mu_{2k-1}$ and $\nu_{2k-1}$ are
 greater than or equal to $\lambda'_{2k-1}$ and less than or equal to $\lambda'_{2k}$
 with analogous interpretation for
 $\lambda_{2k-1}\le \{ \mu_{2k},\nu_{2k}\}\le \lambda_{2k}$.
 The instability intervals are thus $I_{2k}:=(\lambda_{2k-1},\lambda_{2k})$
 and $I_{2k-1}=(\lambda_{2k-1}', \lambda_{2k}')$, $k\in\Z$, which might be the an empty interval.
 From (\ref{characterSolution}) the solutions of $\Delta(\lambda)=2$ and $\Delta(\lambda)=-2$ are
 the eigenvalues of the periodic and anti-periodic problems respectively,
 as these are respectively where $\rho(\lambda) = 1$ and $\rho(\lambda) = -1$.
Hence the eigenvalues of $L_1$ and $L_2$ are $(\lambda_j)$ and $(\lambda'_j)$ respectively. 
This can be visualized as follows.

\setlength{\unitlength}{1cm}
\begin{picture}(6,6)
  \put(0.25, 2){\line(1,0){13.5}}
  \put(0.25, 4){\line(1,0){13.5}}
  \put(0, 3){\vector(1, 0){14}}
  \put(7, 0){\vector(0, 1){6}}
  \put(-1, .5){\qbezier(5,3)(6,4)(8,4)  \qbezier(5,3)(3, 1)( 1,1)}
  \put(15, .5){\qbezier(-5,3)(-6,4)(-8,4)  \qbezier(-5,3)(-3, 1)( -1,1)}
  \put(14.1,2.9){$\lambda$}
  \put(7.1,5.6){$\Delta(\lambda)$}
  \put(6.85,2.85){O}
  \put(7.05, 2.05){-2}
  \put(7.05, 4.05){2}

  \put(.3,2.9){\line(0,1){.2}}
  \put(.1,3.17){ $\nu_{2k-1}$}

  \put(1.2,2.9){\line(0,1){.2}}
  \put(1,3.17){ $\mu_{2k-1}$}

  \put(2.1,2.9){\line(0,1){.2}}
  \put(2.1,3.17){$\lambda'_{2k}$}

  \put(4.6,2.9){\line(0,1){.2}}
 \put(4.5,3.17){$\lambda_{2k-1}$}
 
  \put(6.1,2.9){\line(0,1){.2}}
  \put(6,3.17){$\mu_{2k}$}

  \put(8.1,2.9){\line(0,1){.2}}
  \put(8,3.17){$\nu_{2k}$}

  \put(9.3,2.9){\line(0,1){.2}}
  \put(9,3.17){ $\lambda_{2k}$}

  \put(12.1,2.9){\line(0,1){.2}}
  \put(12,3.17){$\lambda'_{2k+1}$}

  \put(13.2,2.9){\line(0,1){.2}}
  \put(13.1,3.17){$\mu_{2k+1}$}
 \end{picture}

\begin{cor}\label{doubleEVal}
The eigenvalue $\lambda_{2k}$ (resp. $\lambda'_{2k}$) is a double eigenvalue if and only if the
 interval $[\lambda_{2k-1},\lambda_{2k}]$
 $(\mbox{resp. }[\lambda'_{2k-1},\lambda'_{2k}])$ is reduced to a single point.
\end{cor}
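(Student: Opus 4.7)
The plan is to identify $\lambda_{2k}$ being a double periodic eigenvalue of $L_1$ with the monodromy condition $\mathbb{Y}(\pi,\lambda_{2k}) = I$ (and $\lambda'_{2k}$ being a double anti-periodic eigenvalue with $\mathbb{Y}(\pi,\lambda'_{2k}) = -I$), and then to convert this condition, via Lemma \ref{baw-delta}(a,b), into a statement about the order of contact of $\Delta$ with $\pm 2$, hence into a statement about the length of the component $[\lambda_{2k-1},\lambda_{2k}]$. Self-adjointness of $L_1$ makes geometric and algebraic multiplicities coincide, and a doubled eigenvalue forces two linearly independent $\pi$-periodic solutions, equivalently $\mathbb{Y}(\pi,\lambda_{2k}) = I$; the anti-periodic argument is the sign-flipped analogue.

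For the forward direction, if $\mathbb{Y}(\pi,\lambda_{2k}) = I$ then $y_{12}(\pi) = y_{21}(\pi) = 0$, $y_{11}(\pi) = y_{22}(\pi) = 1$, and $\Delta(\lambda_{2k}) = 2$. Substituting into the formula of Lemma \ref{baw-delta}(a) every term vanishes, so $\Delta'(\lambda_{2k}) = 0$. Lemma \ref{baw-delta}(b) then yields $\Delta''(\lambda_{2k}) < 0$, and Taylor's theorem gives $\Delta(\lambda) < 2$ in a punctured neighbourhood of $\lambda_{2k}$. The connected component of $\{\Delta \ge 2\}$ containing $\lambda_{2k}$ must therefore be $\{\lambda_{2k}\}$, i.e. $\lambda_{2k-1} = \lambda_{2k}$.

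For the converse, suppose $[\lambda_{2k-1},\lambda_{2k}]$ is a singleton. By maximality of the component, $\Delta(\lambda_{2k}) \ge 2$ and $\Delta(\lambda) < 2$ in a punctured neighbourhood, so by continuity $\Delta(\lambda_{2k}) = 2$ and $\lambda_{2k}$ is a local maximum, forcing $\Delta'(\lambda_{2k}) = 0$. Lemma \ref{baw-delta}(b) then delivers $y_{12}(\pi) = y_{21}(\pi) = 0$; together with $\det\mathbb{Y}(\pi) = 1$ this gives $y_{11}(\pi) y_{22}(\pi) = 1$, and combining with $\Delta = y_{11}(\pi) + y_{22}(\pi) = 2$ forces $y_{11}(\pi) = y_{22}(\pi) = 1$. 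Hence $\mathbb{Y}(\pi,\lambda_{2k}) = I$, every solution at $\lambda_{2k}$ is $\pi$-periodic, and $\lambda_{2k}$ is a double eigenvalue. The $\lambda'_{2k}$ case is identical with $2$ replaced by $-2$ and $I$ by $-I$: Lemma \ref{baw-delta}(b) now produces $\Delta''(\lambda'_{2k}) > 0$ and the critical point becomes a strict local minimum, the rest of the argument carrying over verbatim. The main subtlety is that the equivalence relies on Lemma \ref{baw-delta}(b) producing a \emph{strict} second-order sign at any stationary point where $\Delta = \pm 2$; without this, $\Delta$ could in principle be tangent to $\pm 2$ from the correct side with a non-degenerate component, which would decouple the analytic degeneracy of $\Delta$ from the operator-theoretic condition $\mathbb{Y}(\pi) = \pm I$.
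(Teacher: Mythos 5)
Your proposal is correct, and its core identification (double eigenvalue $\Leftrightarrow$ $\mathbb{Y}(\pi,\lambda_{2k})=\pm I$ $\Leftrightarrow$ degenerate contact of $\Delta$ with $\pm 2$) is exactly the one the paper exploits, but you take a genuinely different route in the direction ``singleton $\Rightarrow$ double.'' The paper argues structurally from the interlacing: since the indexing puts $\mu_{2k},\nu_{2k}\in[\lambda_{2k-1},\lambda_{2k}]$, collapsing the interval forces $\mu_{2k}=\nu_{2k}=\lambda_{2k}$, and then $y_{12}(\pi)=0=y_{21}(\pi)$ follows immediately from the definitions (\ref{def-nu}), (\ref{def-mu}) of $\mu_n,\nu_n$ as zeros of $y_{12}(\pi,\cdot)$ and $y_{21}(\pi,\cdot)$. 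You instead argue analytically: an isolated point of the closed set $\{\Delta\ge 2\}$ forces $\Delta=2$ there (by continuity) and $\Delta'=0$ (local maximum), and then Lemma \ref{baw-delta}(b) delivers $y_{12}(\pi)=y_{21}(\pi)=0$. Both arguments are sound; yours is more self-contained in Lemma \ref{baw-delta} and does not lean on the interlacing inequalities (\ref{indexing}), while the paper's exploits the oscillation-theoretic bookkeeping already in place and does not need the ``isolated point of a closed set'' observation. The other direction (double $\Rightarrow$ singleton, via $\Delta'=0$ from Lemma \ref{baw-delta}(a), $\Delta''<0$ from Lemma \ref{baw-delta}(b), then Taylor) matches the paper's argument step for step, and your closing remark about the strictness of the second-order sign being what prevents a non-degenerate component from hiding behind a tangency is exactly the right thing to flag.
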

\begin{proof}
	If the interval $[\lambda_{2k-1},\lambda_{2k}]$ reduces to a single point then
         $\lambda_{2k-1}=\mu_{2k} = \nu_{2k} = \lambda_{2k}$ giving
         $y_{12}(\pi)=0=y_{21}(\pi)$. Thus, ${\mathbb Y}(\pi,\lambda_{2k})$ 
         is diagonal with trace
         $2=\Delta(\lambda_{2k})=y_{11}(\pi)+y_{22}(\pi)$
         and determinant $1=[y_{11}y_{22}-y_{12}y_{21}](\pi)=y_{11}(\pi)y_{22}(\pi)$.
         Hence ${\mathbb Y}(\pi,\lambda_{2k})=I$.
         Thus $Y_1$ and $Y_2$ are both periodic eigenfunctions and the
         eigenspace attains its maximal dimension of $2$.

         Conversely if $\lambda_{2k}$ is a double eigenvalue then
         all solutions are $\pi$-periodic as the solution space is only $2$-dimensional.  In particular
         $Y_1$ and $Y_2$ are eigenfunctions. Thus $y_{11}(\pi)=1=y_{22}(\pi)$ and 
         $y_{12}(\pi)=0=y_{21}(\pi)$ giving $\Delta(\lambda_{2k})=2$.
         Now by Lemma \ref{baw-delta}(a)
         $\Delta'(\lambda_{2k})=0$ but by Lemma \ref{baw-delta}(b)
         $\Delta''(\lambda_{2k})< 0$ so the interval $[\lambda_{2k-1},\lambda_{2k}]$
         reduces to a single point.
         
         Similar reasoning can be applied to the case of $\lambda'_{2k}$.
\qed        
\end{proof}

We now turn our attention back to the translated equation (\ref{translEval}).

\begin{thm}\label{thm-bounds} 
Let $\mu_i(\tau)$ denote the eigenvalue $\mu_i$ but for the differential
  equation in which $Q(z)$ has been replaced by the shifted potential $Q(z+\tau)$.
  In terms of the above eigenvalues, for $k\in\Z$, we obtain
	\begin{eqnarray*}
		\begin{array}{rclrcl}
		\lambda_{2k-1} &=& \min_{\tau}\mu_{2k}(\tau),\, k\ne 0, \qquad
 &  \qquad \lambda_{2k} &=& \max_{\tau}\mu_{2k}(\tau),\, k\ne 0,\\
		\lambda'_{2k-1} &=& \min_{\tau}\mu_{2k-1}(\tau) \qquad
 & \qquad \lambda'_{2k} &=& \max_{\tau}\mu_{2k-1}(\tau). \end{array}
	\end{eqnarray*}
\end{thm}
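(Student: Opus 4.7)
The proof has two parts: a containment using Lemma~\ref{invarDiscrim} and the interlacing~(\ref{indexing}), and an attainment via translating Floquet eigenfunctions.

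For the containment, Lemma~\ref{invarDiscrim} gives $\Delta(\lambda,\tau)=\Delta(\lambda)$, so every shifted problem has the same periodic and anti-periodic spectra $\{\lambda_j\},\{\lambda'_j\}$ with identical indexing. Applying (\ref{indexing}) to the shifted problem then yields
\[
\mu_{2k}(\tau)\in[\lambda_{2k-1},\lambda_{2k}],\qquad
\mu_{2k-1}(\tau)\in[\lambda'_{2k-1},\lambda'_{2k}],
\]
for every $\tau\in\R$, which already gives three of the four bounds (the $\le$-halves for the minima and $\ge$-halves for the maxima).

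For the attainment I focus on $\max_\tau\mu_{2k}(\tau)=\lambda_{2k}$; the other three endpoints are handled analogously. Since $\Delta(\lambda_{2k})=2$, there is a non-trivial $\pi$-periodic solution $\Psi=(\psi_1,\psi_2)^T$ of the original equation. For any $\tau_0\in\R$, $\tilde\Psi(z):=\Psi(z+\tau_0)$ solves the shifted equation and is $\pi$-periodic with $\tilde\Psi(0)=\Psi(\tau_0)$. If $\psi_2(\tau_0)=0$ then both $\tilde\Psi(0)$ and $\tilde\Psi(\pi)$ are scalar multiples of $(1,0)^T$, so $\tilde\Psi$ has vanishing second component at $0$ and $\pi$, i.e.\ it satisfies the boundary condition $(BC_4)$ of the shifted problem. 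Thus $\lambda_{2k}$ is an eigenvalue of the shifted $L_4$, and by the containment it can only be $\mu_{2k}(\tau_0)$. The endpoint $\lambda_{2k-1}$ is handled identically using the periodic eigenfunction at $\lambda_{2k-1}$; the anti-periodic endpoints use an anti-periodic Floquet solution $\Psi(\pi)=-\Psi(0)$ in place of a periodic one.

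It remains to produce $\tau_0$ with $\psi_2(\tau_0)=0$. Writing $\Psi=R(\cos\theta_\Psi,\sin\theta_\Psi)^T$ with $R>0$ and $\theta_\Psi$ continuous, $\pi$-periodicity forces $\theta_\Psi(\pi)-\theta_\Psi(0)=2m\pi$ for some $m\in\Z$ (respectively $(2m+1)\pi$ in the anti-periodic case). The crux is to identify $m=k$ in the periodic case; this is obtained by comparing with $\theta(\pi,\mu_{2k},0)=2k\pi$ and using the strict monotonicity of $\theta(\pi,\lambda,\gamma)$ in $\lambda$ together with the $\pi$-equivariance $\theta(\pi,\lambda,\gamma+\pi)=\theta(\pi,\lambda,\gamma)+\pi$ to track the winding of the Floquet direction across the band $[\lambda_{2k-1},\lambda_{2k}]$. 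Since $k\ne 0$ gives $m\ne 0$, the intermediate value theorem produces $\tau_0\in[0,\pi)$ with $\theta_\Psi(\tau_0)\in\pi\Z$, i.e.\ $\psi_2(\tau_0)=0$. In the anti-periodic case the winding difference $(2m+1)\pi$ is always odd and hence non-zero, which is precisely why the exclusion $k\ne 0$ appears only in the two periodic cases. The main obstacle is this winding identification $m=k$: $\Psi$ starts from an eigenvector of $A(\lambda_{2k})$, a direction not a priori aligned with the canonical initial angles $\gamma\in\{0,\pi/2\}$ used to define $\mu_n,\nu_n$, so the argument must transfer winding information across different initial angles using the equivariance in $\gamma$.
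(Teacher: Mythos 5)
Your overall strategy matches the paper's: use Lemma~\ref{invarDiscrim} to get the containment $\mu_{2k}(\tau)\in[\lambda_{2k-1},\lambda_{2k}]$ (and the anti-periodic analogue), and then obtain equality by translating a Floquet eigenfunction until its second component vanishes, so that the shifted problem has $\lambda_j$ as an $L_4$-eigenvalue. There are two places, however, where you assert what needs proving. First, in the containment step you write ``with identical indexing,'' but the indexing of the bands in~(\ref{indexing}) is defined via which pair $\{\mu_n(\tau),\nu_n(\tau)\}$ each band contains, and a priori this indexing could jump as $\tau$ varies; the paper closes this by proving $\mu_n(\tau)$ is continuous in $\tau$ (via the $\tau$-dependence of the Pr\"ufer angle $\Phi_\tau$ and the inverse function theorem), whence the indexing cannot shift. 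Second, and more substantively, the attainment step hinges on the winding being non-zero, which you explicitly flag as ``the main obstacle'' and only sketch. It can indeed be completed with your stated tools: from $\mu_{2k-1}\le\lambda'_{2k}<\lambda_{2k-1}\le\mu_{2k}$, strict $\lambda$-monotonicity and $\gamma$-monotonicity of $\theta(\pi,\cdot,\cdot)$, and the equivariance $\theta(\pi,\lambda,\gamma+\pi)=\theta(\pi,\lambda,\gamma)+\pi$, one gets $(2k-1)\pi<\theta(\pi,\lambda_{2k-1},\gamma_0)<(2k+1)\pi$ for $\gamma_0\in[0,\pi)$, pinning down $m=k$. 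But the paper avoids computing $m$ entirely: for $k\ge1$ it uses only the one-sided chain $\theta(0,\lambda_{2k-1},\gamma_0)=\gamma_0<\pi\le(2k-1)\pi=\theta(\pi,\mu_{2k-1},0)<\theta(\pi,\lambda_{2k-1},0)\le\theta(\pi,\lambda_{2k-1},\gamma_0)$ and applies the intermediate value theorem directly to produce $\tau$ with $\theta(\tau,\lambda_{2k-1},\gamma_0)=\pi$; the case $k\le-1$ is mirror-symmetric, and the anti-periodic case works because the winding is an odd multiple of $\pi$ and hence automatically non-zero, exactly as you observe. So your plan is sound and recognizably the paper's, but you should either carry out the two-sided estimate to get $m=k$, or replace the winding-identification step with the paper's simpler one-sided comparison, and you should add the continuity-in-$\tau$ argument that preserves the indexing.
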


\begin{proof}
     From Lemma \ref{invarDiscrim}, the eigenvalues $\lambda_i, \lambda'_i, i\in\Z,$
     are independent of $\tau$.
     Let $\Phi_\tau(z,\xi,\lambda,\gamma)$ be the solution of the equation (\ref{theta-eq})
     with initial condition 
     $\Phi_\tau(\xi,\xi,\lambda,\gamma)=\gamma$ and 
     $Q(z)$ replaced by $Q(z+\tau)$.
    Here $\Phi_\tau(z,\xi,\lambda,\gamma)$ is continuous in $(z,\xi,\lambda,\gamma)$ 
     by \cite[Section 69.1]{mcshane}.
     In addition, as
    $$\Phi_\tau(z,\xi,\lambda,\gamma)
    =\Phi_0(z+\tau,\xi+\tau,\lambda,\gamma),$$
    it follows that $\Phi_\tau$ is continuous in $\tau$, and
    \begin{eqnarray*}
      \Phi_\tau(\pi,0,\mu_n(\tau),0)&=&n\pi,\quad n\in\Z,
    \end{eqnarray*}
     defines $\mu_n(\tau)$.
   
     As for $\theta(z,\lambda,\gamma)$, the derivative of $\Phi_\tau(z,\xi,\lambda,\gamma)$
    with respect to $\lambda$ is positive. 
     Thus the inverse function theorem applied to $\Phi_\tau(z,\xi,\lambda,\gamma)$ gives
    that $\mu_n(\tau)$ is continuous in $\tau$.
     Now from Lemma \ref{invarDiscrim} the sets $\{\lambda_i | i\in\Z\}$ and
      $\{\lambda'_i | i\in\Z\}$ do not depend on $\tau$, while, from the
     continuity of $\mu_n(\tau)$, the indexing of the eigenvalues $\lambda_i, \lambda'_i$
     does not depend of $\tau$.
     Hence $\mu_{2k}(\tau)\in [\lambda_{2k-1},\lambda_{2k}]$, for all $\tau$, giving
     $$\lambda_{2k-1}\le \inf_\tau \mu_{2k}(\tau)\le \sup_\tau \mu_{2k}(\tau)\le \lambda_{2k}$$
   and
     $$\lambda'_{2k-1}\le \inf_\tau \mu_{2k-1}(\tau)\le \sup_\tau \mu_{2k-1}(\tau)\le 
      \lambda'_{2k}.$$

   If $Y$ is an eigenfunction to the periodic eigenvalue $\lambda_{2k-1}$ then $Y$
     has angular part $\theta(x,\lambda_{2k-1},\gamma)$ where
     without loss of generality $\gamma\in [0,\pi)$.
     Now $\mu_{2k-1}\le \lambda'_{2k}<\lambda_{2k-1}\le\mu_{2k}$.
   For $k\ge 1$,
    as $\theta(x,\lambda,\gamma)$ is increasing in $\gamma$,
 we have
     $$\theta(0,\lambda_{2k-1},\gamma)=\gamma<\pi\le (2k-1)\pi=\theta(\pi,\mu_{2k-1},0)
      <\theta(\pi,\lambda_{2k-1},0)\le \theta(\pi,\lambda_{2k-1},\gamma)$$
     so by the intermediate value theorem there exists $\tau\in (0,\pi]$ with 
      $\theta(\tau,\lambda_{2k-1},\gamma)=\pi$.
     As $Y$ is $\pi$-periodic, so is $Y(x+\tau)$.  Thus
     $\lambda_{2k-1}=\mu_n(\tau)$ for some $n$, but the only $n$ for which 
     $\mu_n(\tau)$ is in $[\lambda_{2k-1},\lambda_{2k}]$ is $n=2k$.
     Hence
  $\lambda_{2k-1}= \min_\tau \mu_{2k}(\tau)$.
    In the case of $k\le -1$ we have
     $$\theta(\pi,\lambda_{2k-1},\gamma)< \theta(\pi,\lambda_{2k-1},\pi)
        =\pi+ \theta(\pi,\lambda_{2k-1},0)\le \pi+ \theta(\pi,\mu_{2k},0)=(2k+1)\pi\le -\pi.$$
     But  $0\le\gamma=\theta(0,\lambda_{2k-1},\gamma)$
     so there exists $\tau\in [0,\pi)$ such that $\theta(\tau,\lambda_{2k-1},\gamma)=0$.
     Proceeding as in the previous case, $\lambda_{2k-1}=\mu_{2k}(\tau)$
     and  $\lambda_{2k-1}= \min_\tau \mu_{2k}(\tau)$.

          For $k\in\Z$, we have that $\mu_{2k}\le\lambda_{2k}<\mu_{2k+1}$.
   If $Y$ is an eigenfunction to the periodic eigenvalue $\lambda_{2k}$ then $Y$
     has angular part $\theta(x,\lambda_{2k},\gamma)$ where
     without loss of generality $\gamma\in [0,\pi)$.
     For $k\ge 1$,
     $$\theta(0,\lambda_{2k},\gamma)=\gamma<\pi< 2k\pi=\theta(\pi,\mu_{2k},0)
      \le\theta(\pi,\lambda_{2k},0)\le \theta(\pi,\lambda_{2k},\gamma)$$
     so there exists $\tau\in (0,\pi]$ for which $\theta(\tau,\lambda_{2k-1},\gamma)=\pi$
     and $\lambda_{2k}=\mu_{2k}(\tau)$.
    In the case of $k\le -1$ we have
     $$\theta(\pi,\lambda_{2k},\gamma)< \theta(\pi,\lambda_{2k},\pi)
        =\pi+ \theta(\pi,\lambda_{2k},0)< \pi+ \theta(\pi,\mu_{2k+1},0)=(2k+1)\pi\le -\pi.$$
    Now $-\pi<0\le \gamma=\theta(0,\lambda_{2k},\gamma)$
 so there exists $\tau\in [0,\pi)$ with $\theta(\tau,\lambda_{2k},\gamma)=0$
     giving $\lambda_{2k}=\mu_{2k}(\tau)$. Thus for $k\in\Z\backslash\{0\}$,
    $\lambda_{2k}=\max_\tau \mu_{2k}(\tau)$.
    
  For an eigenfunction of the $Y$ of the anti-periodic problem at eigenvalue $\lambda'_j$, where
  $j=2k-1$ or $2k$,
 we have $Y(0)=-Y(\pi)$ giving that the angular part $\theta(x,\lambda'_j,\gamma)$ of $Y$ 
  necessarily changes by an odd multiple of $\pi$ over the interval $[0,\pi]$.  
  In particular this ensures that there is some $\tau \in [0,\pi]$ for which
  $\theta(\tau,\lambda'_j,\gamma)= \pm\pi$. Setting $Z(x)=Y(x)$ for $x\in [0,\pi]$ and
  $Z(x)=-Y(x-\pi)$ for $x\in (\pi,2\pi]$ we have that $Z$ is a solution of the periodically
  extended equation on $[0,2\pi]$ for $\lambda=\lambda'_j$ and that
  $Z(x+\tau)$ is an eigenfunction to the eigenvalue $\mu_{2k-1}(\tau)$.
  Thus showing that $\mu_{2k-1}(\tau)$ attains both $\lambda'_{2k-1}$ and $\lambda'_{2k}$.  
 \qed
\end{proof}

{\bf Remark }
In the above theorem we have that $\mu_0(\tau)\in [\lambda_{-1},\lambda_0]$, but 
 in general $\lambda_{-1}$ is not the minimum of $\mu_0(\tau)$ nor is $\lambda_0$
 the maximum of $\mu_0(\tau)$.  To see this consider the following example.

{\bf Example} Consider the case of $Q(t)=\left[\begin{array}{cc} 0 & 1 \\ 1 & 0 \end{array}\right]$ then
 $\mu_0(\tau)=0=\nu_0(\tau)$ for all $\tau$, but $\Delta(0)=2\cosh(\pi)>2$ so $\lambda=0$ is not an eigenvalue of the
 periodic problem.  Thus here we have
        \begin{eqnarray*}
                \lambda_{-1} < \inf_{\tau}\mu_{0}(\tau)=0= \sup_{\tau}\mu_{0}(\tau)<\lambda_0.
        \end{eqnarray*}

{\bf Remark }
 If $Q(x)$ is constant then $\mu_n(\tau)$ and $\nu_n(\tau)$ are independent of $\tau$ and from the above
 all the instability intervals vanish except possibly $I_0=(\lambda_{-1},\lambda_0)$.
 Our main result, in Section 6, gives a partial converse to this.

%%%%%%%%%%%%%%%%%%% asymptotics %%%%%%%%%%%%%%%%%%%%%%
\newsection{Solution asymptotics\label{sec-SolnAsymp}}

We say that the potential $Q$ is in canonical form if
\begin{eqnarray}
  Q(z) = \left( \begin{array}{cc} q_1(z)  & q_2(z)  \\
  q_2(z)  &  -q_1(z)  \end{array} \right), \label{canonical}
\end{eqnarray} 
where $p_1$ and $p_2$ are real valued measurable functions. A direct computation gives that if
$Q$ is in canonical form then $JQ=-QJ$.
Through out the remainder the norm of a matrix denotes the
operator matrix norm $$|[c_{ij}]|=\max_{j}\sqrt{\sum_i |c_{ij}|^2}.$$
Solution asymptotics will be given only for the case of (\ref{MainDifferEqn})  
with potential in canonical form as these are all that are required for the study of the inverse 
problem.

\begin{thm}\label{CanonicalAsympts}
  Let $Q$ be in canonical form with entries
  absolutely continuous and $|Q'|$ integrable on $[0,\pi]$. 
  The matrix solution $\mathbb{Y}$ of (\ref{MainDifferEqn}) with initial condition
   (\ref{initialConds})
  is of order $1$ and for $|\lambda|$ large takes the asymptotic form
\begin{eqnarray*}
  \mathbb{Y}(z) =  e^{-\lambda Jz}\left(I-\frac{Q(0)}{2\lambda}\right)
     + \frac{Q(z)e^{-\lambda Jz}}{2\lambda} 
     + \int_0^z \frac{e^{\lambda J(t-z)}}{2\lambda}(JQ^2-Q')e^{-\lambda Jt}\,dt
     +O\left(\frac{e^{|\Im\lambda|z}}{\lambda^2}\right).
\end{eqnarray*}
\end{thm}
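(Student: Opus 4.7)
The plan is to recast (\ref{MainDifferEqn}) as a Volterra integral equation via variation of parameters and then iterate twice, at each stage exploiting the canonical-form identities
\begin{eqnarray*}
JQ=-QJ,\quad JQJ=Q,\quad Q^2=(q_1^2+q_2^2)I,\quad e^{\alpha J}Q(t)=Q(t)e^{-\alpha J},
\end{eqnarray*}
together with $J^{-1}=-J$, to extract explicit leading-order terms while keeping all exponential factors paired so that the cumulative growth never exceeds $e^{|\Im\lambda|z}$.

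First I rewrite (\ref{MainDifferEqn}) as $\mathbb{Y}'+\lambda J\mathbb{Y}=JQ\mathbb{Y}$; the unperturbed equation has fundamental matrix $e^{-\lambda Jz}$, so variation of parameters with $\mathbb{Y}(0)=I$ gives
\begin{eqnarray*}
\mathbb{Y}(z)=e^{-\lambda Jz}+\int_0^z e^{-\lambda J(z-t)}JQ(t)\mathbb{Y}(t)\,dt=e^{-\lambda Jz}+\int_0^z JQ(t)e^{\lambda J(z-t)}\mathbb{Y}(t)\,dt,
\end{eqnarray*}
the second equality using $e^{\alpha J}Q=Qe^{-\alpha J}$. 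A standard Volterra bound shows that the Picard iterates $\mathbb{Y}_0=e^{-\lambda Jz}$, $\mathbb{Y}_n=\mathbb{Y}_0+K\mathbb{Y}_{n-1}$ converge uniformly to $\mathbb{Y}$ on $[0,\pi]$ and that $\mathbb{Y}$ is entire in $\lambda$ of order $1$.

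For the first correction one has
\begin{eqnarray*}
\mathbb{Y}_1-\mathbb{Y}_0=\int_0^z JQ(t)e^{\lambda J(z-2t)}\,dt,
\end{eqnarray*}
and integration by parts with antiderivative $\frac{J}{2\lambda}e^{\lambda J(z-2t)}$ (which exists because $J^2=-I$) together with $JQJ=Q$ gives
\begin{eqnarray*}
\mathbb{Y}_1-\mathbb{Y}_0=\frac{Q(z)e^{-\lambda Jz}-Q(0)e^{\lambda Jz}}{2\lambda}-\frac{1}{2\lambda}\int_0^z Q'(t)e^{\lambda J(z-2t)}\,dt,
\end{eqnarray*}
each summand being $O(e^{|\Im\lambda|z}/|\lambda|)$. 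Substituting into $K(\mathbb{Y}_1-\mathbb{Y}_0)$, the key resonance piece collapses via the anti-commutation:
\begin{eqnarray*}
\int_0^z JQ(t)e^{\lambda J(z-t)}\,\frac{Q(t)e^{-\lambda Jt}}{2\lambda}\,dt=\frac{1}{2\lambda}\int_0^z JQ^2(t)e^{-\lambda Jz}\,dt=\frac{e^{-\lambda Jz}}{2\lambda}\int_0^z JQ^2(t)\,dt.
\end{eqnarray*}
All remaining contributions to $K(\mathbb{Y}_1-\mathbb{Y}_0)$ take the form $\int F(t)e^{\lambda J(2t-z)}\,dt$ or, after interchanging the order of integration on the $Q'$ piece, $\int G(s)\int_s^z JQ(t)e^{\lambda J(2t-z-2s)}\,dt\,ds$; one further integration by parts shows each to be $O(e^{|\Im\lambda|z}/\lambda^2)$, since the exponents $2t-z$ and $2t-z-2s$ stay inside $[-z,z]$.

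Finally, noting $e^{-\lambda Jz}Q(0)=Q(0)e^{\lambda Jz}$, and that the resonance and the $Q'$ integrals both admit the unified rewriting
\begin{eqnarray*}
\int_0^z\frac{e^{\lambda J(t-z)}}{2\lambda}\bigl(JQ^2(t)-Q'(t)\bigr)e^{-\lambda Jt}\,dt
\end{eqnarray*}
(because $JQ^2=(q_1^2+q_2^2)J$ commutes with $e^{\pm\lambda Js}$ while $Q'$, being itself in canonical form, anti-commutes), the explicit part of $\mathbb{Y}_2$ matches the stated asymptotic. The Neumann tail $\mathbb{Y}-\mathbb{Y}_2=K(\mathbb{Y}-\mathbb{Y}_1)$ is handled by applying the same Volterra estimate to the already-small input $\mathbb{Y}-\mathbb{Y}_1=O(e^{|\Im\lambda|z}/|\lambda|)$ and performing one further integration by parts, yielding the required $O(e^{|\Im\lambda|z}/\lambda^2)$ bound. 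The main obstacle throughout is exponential bookkeeping: a naive Picard iteration produces growth $e^{n|\Im\lambda|z}$ at the $n$-th stage, and only the identities $JQ=-QJ$ and $JQJ=Q$ ensure that the exponentials telescope back to a single $e^{|\Im\lambda|z}$ at every step, which is precisely what the claimed remainder demands.
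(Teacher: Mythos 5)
Your proof is correct and uses the same key ingredients as the paper (the Volterra integral equation, the Gronwall bound, and the canonical-form identities $JQ=-QJ$, $JQJ=Q$, $Q^2=(q_1^2+q_2^2)I$ to collapse the exponentials in each integration by parts), but it is organized differently. The paper substitutes $\mathbb{Y}=e^{-\lambda Jz}\mathbb{W}$, integrates the resulting ODE for $\mathbb{W}$, integrates by parts once, and then replaces the troublesome $Q\mathbb{W}'$ term by $-Je^{-2\lambda Jt}Q^2\mathbb{W}$ so that the exponentials cancel; this produces the \emph{exact} identity $\mathbb{Y}(z)=e^{-\lambda Jz}\bigl(I-\tfrac{Q(0)}{2\lambda}\bigr)+\tfrac{Q(z)\mathbb{Y}(z)}{2\lambda}+\int_0^z\tfrac{e^{\lambda J(t-z)}}{2\lambda}(JQ^2-Q')\mathbb{Y}\,dt$ for the true solution, from which the crude estimate $\mathbb{Y}=e^{-\lambda Jz}+O(e^{|\Im\lambda|z}/\lambda)$ and then the full asymptotic drop out by a single back-substitution. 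You instead run a two-step Picard iteration and compute $\mathbb{Y}_1$, $\mathbb{Y}_2$ explicitly; your resonance computation reproduces exactly the $JQ^2$ term of the paper. The advantage of the paper's exact identity is that the remainder is automatically expressed in terms of $\mathbb{Y}$ itself, so the $O(e^{|\Im\lambda|z}/\lambda^2)$ error follows immediately from the Gronwall bound; by contrast your route still has to control the Neumann tail $\mathbb{Y}-\mathbb{Y}_2=\sum_{n\ge3}K^n\mathbb{Y}_0$, and the passage you give (``apply the Volterra estimate to $\mathbb{Y}-\mathbb{Y}_1=O(e^{|\Im\lambda|z}/\lambda)$ and do one further integration by parts'') needs to be made more precise: integrating $K$ by parts against a general input of size $O(e^{|\Im\lambda|z}/\lambda)$ does not by itself gain another factor $1/\lambda$, since the derivative of $\mathbb{Y}-\mathbb{Y}_1$ is of size $O(\lambda)$ times its value. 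The cleanest repair within your scheme is to show $K^3\mathbb{Y}_0=O(e^{|\Im\lambda|z}/\lambda^2)$ by the same explicit integration by parts you used for $K^2\mathbb{Y}_0$ (the leading piece of $K^2\mathbb{Y}_0$ is $\frac{e^{-\lambda Jz}}{2\lambda}\int_0^z JQ^2\,dt$, and $K$ applied to it again takes the form $\int JQ(t)e^{\lambda J(z-2t)}(\cdots)\,dt$), and then sum $\sum_{m\ge0}K^m(K^3\mathbb{Y}_0)=O(e^{|\Im\lambda|z}/\lambda^2)$ by the crude Volterra bound. With that patch the argument is complete; the rest (the computation of $\mathbb{Y}_1-\mathbb{Y}_0$, the resonance collapse, and the matching with the stated asymptotic) is correct.
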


\begin{proof}
 Using variation of parameters we can represent equation (\ref{MainDifferEqn})
 as the integral equation
\begin{equation}
 \mathbb{Y}(z) = e^{-\lambda J z}
    + \int_0^z e^{-\lambda J(z-t)}JQ\mathbb{Y} dt. \label{var-par-1}
\end{equation}
In the above equation take 
$\mathbb{Y}(z)=e^{|\Im\lambda|z}\mathbb{V}(z)$ giving
\begin{equation}
 \mathbb{V}(z) = e^{-|\Im \lambda| z}e^{-\lambda J z}
    + \int_0^z e^{-|\Im \lambda| (z-t)}e^{-\lambda J(z-t)}JQ\mathbb{V}(t) dt. \label{var-par-2}
\end{equation}
From (\ref{var-par-2}) we have
\begin{equation}
 |\mathbb{V}(z)| \le 1
    + \int_0^z |Q||\mathbb{V}(t)| dt. \label{var-par-3-i}
\end{equation}
Applying Gronwall's Lemma \cite[Lemma 6.3.6]{hormander} to (\ref{var-par-3-i})
gives
\begin{eqnarray*}
 |\mathbb{V}(z)| 
  \le \exp\left(\int_0^\pi |Q|\, dt\right).
\end{eqnarray*}
Hence $\mathbb{V}(z) = O(1)$ and thus $\mathbb{Y}(z) = O(e^{|\Im\lambda|z})$.

Let $\mathbb{Y}(z) = e^{-\lambda Jz}\mathbb{W}(z)$. Direct computation shows that 
$J Q = -Q J$. 
In terms of $\mathbb{W}$ (\ref{MainDifferEqn}) becomes
\begin{equation}
 \mathbb{W}' = Je^{2\lambda J z}Q\mathbb{W}. \label{var-par-3}
\end{equation}
and thus
\begin{equation}
 Q\mathbb{W}' = -Je^{-2\lambda J z}Q^2\mathbb{W}. \label{var-par-4}
\end{equation}
Integrating (\ref{var-par-3})  from $0$ to $z$ gives
\begin{equation}
 \mathbb{W}(z) = \mathbb{W}(0)
     + \frac{1}{2\lambda}\int_0^z \frac{d(e^{2\lambda J t})}{dt}Q(t)\mathbb{W}(t)\,dt, 
  \label{var-par-5}
\end{equation}
which, when integrated by parts, yields
\begin{equation}
 \mathbb{W}(z) = \mathbb{W}(0)
     + \frac{1}{2\lambda}\left[e^{2\lambda J t}Q\mathbb{W}\right]_0^z 
     - \frac{1}{2\lambda}\int_0^z e^{2\lambda J t}(Q'\mathbb{W}+Q\mathbb{W}')\,dt. 
  \label{var-par-6}
\end{equation}
Combining (\ref{var-par-4}) and (\ref{var-par-6}) gives
\begin{equation}
 \mathbb{W}(z) = \mathbb{W}(0)
     + \frac{1}{2\lambda}\left[e^{2\lambda J t}Q\mathbb{W}\right]_0^z 
     - \frac{1}{2\lambda}\int_0^z e^{2\lambda J t}Q'\mathbb{W}\,dt 
     + \frac{J}{2\lambda}\int_0^z Q^2\mathbb{W}\,dt. 
  \label{var-par-7}
\end{equation}
Thus
\begin{equation}
  \mathbb{Y}(z) =  e^{-\lambda Jz}\left(I-\frac{Q(0)}{2\lambda}\right)
     + \frac{Q(z)\mathbb{Y}(z)}{2\lambda} 
     + \int_0^z \frac{e^{\lambda J(t-z)}}{2\lambda}(JQ^2-Q')\mathbb{Y}\,dt.
\label{var-par-9}
\end{equation}
Here $|\mathbb{Y}(t)|=O(e^{|\Im\lambda|t})$ and
$|e^{\lambda J(t-z)}|=O(e^{|\Im\lambda|(z-t)})$ for $0\le t\le z$, so from (\ref{var-par-9})
\begin{equation}
  \mathbb{Y}(z) =  e^{-\lambda Jz}+O\left(\frac{e^{|\Im\lambda|z}}{\lambda}\right).
\label{var-par-10}
\end{equation}
Substituting (\ref{var-par-10}) into (\ref{var-par-9}) gives
\begin{eqnarray*}
  \mathbb{Y}(z) =  e^{-\lambda Jz}\left(I-\frac{Q(0)}{2\lambda}\right)
     + \frac{Q(z)e^{-\lambda Jz}}{2\lambda} 
     + \int_0^z \frac{e^{\lambda J(t-z)}}{2\lambda}(JQ^2-Q')e^{-\lambda Jt}\,dt
     +O\left(\frac{e^{|\Im\lambda|z}}{\lambda^2}\right)
\end{eqnarray*}
proving the theorem.
\qed
\end{proof}

Applying the Riemann-Lebesgue Lemma \cite{mcshane} to  
Theorem \ref{CanonicalAsympts} gives the courser but simpler asymptotic approximation
\begin{eqnarray}
  \mathbb{Y}(z) =  e^{-\lambda Jz}\left[I-\frac{Q(0)}{2\lambda}
    + \frac{J}{2\lambda}\int_0^z Q^2\,dt
 \right]
     + e^{\lambda Jz}\frac{Q(z)}{2\lambda} 
     +o\left(\frac{e^{|\Im\lambda|z}}{\lambda}\right).\label{course-asymp}
\end{eqnarray}

%%%%%%%%%%%%%% main results %%%%%%%%%%%%%%%%%%%%%%%%%%
\newsection{Inverse problem \label{sec-MainRes}}
We are now in a position to characterize the class of real symmetric matrices, $Q$, with 
absolutely continuous entries for which the instability intervals of (\ref{MainDifferEqn}) vanish,
see Theorem~\ref{main-thm}.

\begin{lem}\label{thrm1.1}
  Let $Q$ be in canonical form and have absolutely continuous entries which are 
 $\pi$-periodic on $\R$. 
 All instability intervals of (\ref{MainDifferEqn}) vanish if and only if
 $Q = 0$.
\end{lem}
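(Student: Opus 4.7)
The forward direction is immediate: when $Q\equiv 0$, $\mathbb{Y}(z)=e^{-\lambda J z}=\cos(\lambda z)I-\sin(\lambda z)J$, so $\Delta(\lambda)=2\cos(\lambda\pi)\in[-2,2]$ for $\lambda\in\R$, and Corollary~\ref{baw-boundary} shows every instability interval is empty.

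For the converse, suppose all instability intervals vanish. By Corollary~\ref{doubleEVal} every $\lambda_j$ and $\lambda'_j$ is a double eigenvalue, so $\mathbb{Y}(\pi,\lambda_j)=\pm I$ and $\mathbb{Y}(\pi,\lambda'_j)=\mp I$. The interlacing (\ref{indexing}), together with the degeneracy of each bracket $[\lambda_{2k-1},\lambda_{2k}]$ and $[\lambda'_{2k-1},\lambda'_{2k}]$, forces $\mu_n=\nu_n$ for every $n\in\Z$ and this common set coincides with $\{\lambda_j\}\cup\{\lambda'_j\}$. Using Lemma~\ref{baw-delta}(b) with the Wronskian identity
\[
\Delta^2-4=(y_{11}(\pi)-y_{22}(\pi))^2+4y_{12}(\pi)y_{21}(\pi),
\]
I will conclude that each common zero is simple for both $y_{12}(\pi,\cdot)$ and $y_{21}(\pi,\cdot)$.

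Next, Theorem~\ref{CanonicalAsympts} in canonical form (where $\trace Q=\trace(QJ)=0$ and $Q^2=(q_1^2+q_2^2)I$) gives, on $\R$,
\[
y_{12}(\pi,\lambda)=-\sin(\lambda\pi)+O(1/\lambda),\qquad y_{21}(\pi,\lambda)=\sin(\lambda\pi)+O(1/\lambda),
\]
and also $y_{11}(\pi,\lambda)-y_{22}(\pi,\lambda)=O(1/\lambda)$. Since $y_{12}(\pi,\cdot)$ and $y_{21}(\pi,\cdot)$ are entire of order $\le 1$, exponential type $\pi$, and share the same simple real zeros (and no complex zeros, by self-adjointness of $L_3,L_4$), a Hadamard factorisation of genus one shows they differ by a scalar times $e^{(c_{12}-c_{21})\lambda}$; the boundedness of the quotient on $\R$ forces $c_{12}=c_{21}$, and the asymptotic pins the scalar ratio to $-1$. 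Hence $y_{12}(\pi,\lambda)\equiv-y_{21}(\pi,\lambda)$. Applying the same reasoning to $(y_{11}(\pi)-y_{22}(\pi))/y_{21}(\pi)$, using the $L^\infty(\R)$ bound $|y_{11}(\pi)-y_{22}(\pi)|\le 2|y_{21}(\pi)|$ from $\Delta^2-4\le 0$, yields $y_{11}(\pi,\lambda)\equiv y_{22}(\pi,\lambda)$.

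Substituting the refined canonical-form asymptotics from Theorem~\ref{CanonicalAsympts} into these two identities, multiplying by $\lambda$, and evaluating at $\lambda=n+\tfrac12$ (so $\sin(\lambda\pi)=\pm 1$ while the oscillatory integrals of $q_1'$ and $q_2'$ vanish as $n\to\infty$ by Riemann-Lebesgue, since $Q$ is absolutely continuous), the surviving leading terms force $q_1(0)=0$ and $q_2(0)=0$. By Lemma~\ref{invarDiscrim} the hypothesis is invariant under $Q\mapsto Q(\cdot+\tau)$, so the same argument applied to each translate gives $q_1(\tau)=q_2(\tau)=0$ for every $\tau\in\R$, i.e.\ $Q\equiv 0$. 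The main technical obstacle is the Hadamard/Phragm\'en-Lindel\"of step pinning both ratios to constants: it hinges on the matching exponential types of numerator and denominator (so the quotient has type $0$) and the real-valuedness on $\R$, after which the $L^\infty(\R)$ bound upgrades globally and Liouville applies.
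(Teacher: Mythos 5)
Your argument takes a genuinely different route from the paper's. The paper first shows that the off-diagonal entries $u_{12}(\pi,\tau)$, $u_{21}(\pi,\tau)$ of the translated monodromy matrix are independent of $\tau$ (via a maximum-modulus estimate on the squares $\Gamma_n$), then reads off from the asymptotics that $q_1$ is constant, applies a rotation by $e^{J\pi/4}$ to conclude $q_2$ is constant, and finally kills the constant by the explicit computation $\Delta(\lambda)=2\cos\sqrt{\lambda^2-m^2}\,\pi$. You instead try to prove two $\lambda$-identities for the \emph{untranslated} monodromy matrix, namely $y_{12}(\pi,\cdot)\equiv -y_{21}(\pi,\cdot)$ and $y_{11}(\pi,\cdot)\equiv y_{22}(\pi,\cdot)$ (so that $\mathbb Y(\pi,\lambda)$ is a rotation for every $\lambda$), read off $q_1(0)=q_2(0)=0$ from the sharp asymptotics along $\lambda=n+\tfrac12$, and then sweep $\tau$ using the $\tau$-invariance of the hypothesis. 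This is an attractive alternative that bypasses the rotation-by-$\pi/4$ trick and the final constant-potential computation.

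The first identity $y_{12}(\pi,\cdot)\equiv -y_{21}(\pi,\cdot)$ is sound: when all instability intervals collapse, Corollary \ref{doubleEVal} and Lemma \ref{stability} give $\mu_n=\nu_n$ for all $n$, and both $y_{12}(\pi,\cdot)$ and $y_{21}(\pi,\cdot)$ are entire of order one, exponential type $\pi$, with exactly the same simple real zeros $\{\mu_n\}$ (reality because $L_3,L_4$ are self-adjoint; simplicity because $\theta(\pi,\cdot,\gamma)$ is strictly increasing). Hadamard's factorisation then gives a ratio $Ke^{b\lambda}$, and the real-line asymptotics pin $b=0$, $K=-1$.

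The second identity is where there is a genuine gap. You apply ``the same reasoning'' to $h=(y_{11}(\pi,\cdot)-y_{22}(\pi,\cdot))/y_{21}(\pi,\cdot)$, but this quotient is \emph{not} of the form $Ke^{b\lambda}$: although $y_{11}-y_{22}$ vanishes at every zero of $y_{21}$ (because $\mathbb Y(\pi,\mu_n)=\pm I$ by Corollary \ref{doubleEVal}), you have no control over additional zeros of $y_{11}-y_{22}$, so the Hadamard quotient retains a nontrivial canonical product. The $L^\infty(\R)$ bound $|h|\le 2$ (from $\Delta^2-4=(y_{11}-y_{22})^2-4y_{21}^2\le 0$, using the first identity) is not, by itself, enough to run Liouville: an entire function bounded on $\R$ need not be constant unless one also controls its exponential type, and bounding the type of $h$ requires a lower bound on $|y_{21}|$ away from its real zeros — precisely the contour estimate the paper sets up on the squares $\Gamma_n$. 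The fix is short: on $\Gamma_n$ one has $|y_{21}(\pi,\lambda)|\gtrsim e^{\pi|\Im\lambda|}$ while $|y_{11}(\pi,\lambda)-y_{22}(\pi,\lambda)|=O\bigl(e^{\pi|\Im\lambda|}/n\bigr)$, so $|h|=O(1/n)$ on $\Gamma_n$, and the maximum-modulus principle forces $h\equiv 0$. With that step supplied, your reading of the refined asymptotics at $\lambda=n+\tfrac12$ does yield $q_1(0)=q_2(0)=0$, and translation invariance of the hypothesis (via Lemma \ref{invarDiscrim}) then gives $Q\equiv 0$, as claimed.

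One small inaccuracy: simplicity of the zeros of $y_{12}(\pi,\cdot)$ and $y_{21}(\pi,\cdot)$ at $\mu_n=\nu_n$ does not come from Lemma \ref{baw-delta}(b) combined with the Wronskian identity; it follows from the strict monotonicity in $\lambda$ of the Pr\"ufer angle $\theta(\pi,\lambda,\gamma)$. Lemma \ref{baw-delta}(b) is what underlies the collapse of the gap and $\mathbb Y(\pi)=\pm I$, which you already get from Corollary \ref{doubleEVal}.
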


\proof 
If $Q(z) = {0}$, 
then $\mathbb{Y}(z) = e^{-\lambda J z}$ 
 giving $\Delta(\lambda) = 2\cos(\lambda z)$. 
 So for all real $\lambda$, $|\Delta|\leq 2$. Thus all instability intervals vanishes. 

From the converse, suppose that all instability intervals of (\ref{MainDifferEqn}) vanish.
Now  $\lambda'_{2k-1}=\lambda'_{2k}$ and
 $\lambda_{2k-1}=\lambda_{2k}$, $k\in\Z$. 
So from 
  (\ref{indexing})
  $\lambda'_{2k-1}=\mu_{2k-1}(\tau)=\nu_{2k-1}(\tau)=\lambda'_{2k}$ and
 $\lambda_{2k-1}=\mu_{2k}(\tau)=\nu_{2k}(\tau)=\lambda_{2k}$ for all $\tau\in\R$. 
In the notation of Lemma \ref{invarDiscrim}, as a consequence of the above equality,
the $\lambda$-zeros of the entire functions $u_{ij}(\pi,\tau), i\ne j$, are
 $\{\lambda_{2k} |k\in\Z\}\cup\{\lambda'_{2k} |k\in\Z\}$,
for each $\tau\in\R$.
In addition the zeros of $u_{ij}(\pi,\tau), i\ne j,$ are simple for each $\tau\in\R$.  
Here $[u_{ij}]_{(j,i)}=\mathbb{U}$.
Thus $u_{ij}(\pi,\tau)/u_{ij}(\pi,0)$, for each $\tau\in\R$ and $i\ne j$, is an entire function of
$\lambda$.
However, from Theorem \ref{CanonicalAsympts}, 
\begin{eqnarray}
 {u}_{ij}(\pi,\tau)=(-1)^j\sin\lambda\pi+O\left(\frac{e^{|\Im\lambda|\pi}}{\lambda}\right),\quad 
 i\ne j.
 \label{asymp-order-1}
\end{eqnarray}
 Let $\Gamma_n, n\in\N,$ denote the closed paths in $\C$ consisting of
 the squares with corners at $2n(1\pm {\rm i})+\frac{1}{2}$ and 
  $-2n(1\mp{\rm i})+\frac{1}{2}$.
 On the edges of $\Gamma_n$ parametrized by $\lambda=\pm (2n +{\rm i}t)+\frac{1}{2}, t\in [-2n,2n],$ 
for $i\ne j$, we have
\begin{eqnarray*}
   {u}_{ij}(\pi,\tau)
   =(-1)^j\cosh \pi t+O\left(\frac{e^{\pi |t|}}{n}\right)
   =(-1)^j\frac{e^{\pi |t|}}{2}\left(1+O\left(\frac{1}{n}\right)\right),
\end{eqnarray*}
 giving
\begin{eqnarray*}
  \frac{ u_{ij}(\pi,\tau)}{u_{ij}(\pi,0)}
   =1+O\left(\frac{1}{n}\right).
\end{eqnarray*}
 On the edges of $\Gamma_n$ parametrized by $\lambda=\pm(2n{\rm i}-t)+\frac{1}{2},
 t\in [-2n,2n],$ we have
\begin{eqnarray*}
   {u}_{ij}(\pi,\tau)
   =(-1)^j\cos \pi (t-2ni)+O\left(\frac{e^{\pi |t|}}{n}\right)
   =(-1)^j\frac{e^{2\pi n}}{2}\left(e^{\pi{\rm i} t}+O\left(\frac{1}{n}\right)\right),
\end{eqnarray*}
giving
\begin{eqnarray*}
  \frac{ {u}_{ij}(\pi,\tau)}{{u}_{ij}(\pi,0)}
   =1+O\left(\frac{1}{n}\right).
\end{eqnarray*}
Thus by the maximum modulus principal, for $i\ne j$,
\begin{eqnarray*}
  \left|\frac{ {u}_{ij}(\pi,\tau)}{{u}_{ij}(\pi,0)}-1\right|
   =O\left(\frac{1}{n}\right).
\end{eqnarray*}
 on the region enclosed by $\Gamma_n$ for each $n\in\N$.
Taking $n\to\infty$ gives
\begin{eqnarray*}
  \frac{ {u}_{ij}(\pi,\tau)}{{u}_{ij}(\pi,0)}=1,\quad i\ne j,
\end{eqnarray*}
 on $\C$, and ${u}_{ij}(\pi,\tau)={u}_{ij}(\pi,0)$, for all $\tau\in\R$, $i\ne j$,
 on $\C$.
By Lemma \ref{invarDiscrim}, 
 $\Delta(\lambda,\tau)= \Delta(\lambda)$ for $\tau\in\R$ and $\lambda\in\C$.
Thus, as functions of $\lambda$, we have 
\begin{eqnarray}
{u}_{ij}(\pi,\tau) &=& {y}_{ij}(\pi)  \quad \mbox{ for } i \neq j, \\
{u}_{11}(\pi,\tau) + {u}_{22}(\pi,\tau) &=& {y}_{11}(\pi) + {y}_{22}(\pi).
\end{eqnarray}
Setting $\gamma(\tau,\lambda):={u}_{11}(\pi,\tau)- {y}_{11}(\pi)$
it follows that
${u}_{22}(\pi,\tau) = {y}_{22}(\pi)-\gamma(\tau,\lambda)$ and
\begin{eqnarray}
\mathbb{U}(\pi, \tau) = \mathbb{Y}(\pi) +\gamma(\tau,\lambda)\sigma_3,\label{1-6-2013-1}
\end{eqnarray} 
where $\sigma_3 = \left( \begin{array}{cc}1 &  0\\0 & -1 \end{array} \right)$.
Combining (\ref{course-asymp}) and (\ref{1-6-2013-1}) gives
\begin{eqnarray}
 \gamma(\tau,\lambda)\sigma_3=
 \mathbb{U}(\pi,\tau)- \mathbb{Y}(\pi) =  \frac{\sin\lambda\pi}{\lambda} J
     \left[Q(\tau) -Q(0) \right]
      +o\left(\frac{e^{|\Im\lambda|\pi}}{\lambda}\right).\label{gamma-asymp}
\end{eqnarray}

Equating the off diagonal components in (\ref{gamma-asymp}) yields in the notation of (\ref{canonical})
\begin{eqnarray}\label{finalstep}
(q_1(\tau)-q_1(0))\sin(\lambda\pi)=o\left(e^{|\Im\lambda|\pi}\right).
\end{eqnarray}
Now setting $\lambda=2n+\frac{1}{2}$ for $n\in\N$ in (\ref{finalstep})
gives  $q_1(\tau)-q_1(0)=o\left(1\right)$, from which it follows that $q_1(\tau)=q_1(0)$
for all $\tau\in\R$.  Hence $q_1$ is constant.

Let $\tilde{Y}(z) = e^{J\omega}Y(z)$, $\omega \in\R$.
This unitary transformation transforms (\ref{MainDifferEqn}) to 
\begin{equation}\label{MainThrmEval2}
	J\tilde{Y}' + \tilde{Q}\tilde{Y} = \lambda \tilde{Y},
\end{equation}
where $\tilde{Q} = e^{2J\omega}Q$.
Such unitary transformations are isospectral, thus the periodic
 eigenvalues of problem (\ref{MainDifferEqn}) and (\ref{MainThrmEval2}) are the same,
and similarly for the antiperiodic eigenvalues, see \cite[Ch. 7.1]{LaS}.
Setting $\omega = \pi/4$, then, in the notation of (\ref{canonical}),
\begin{eqnarray}\label{secondFinalStep}
\tilde{Q}(z) = \left( \begin{array}{cc} q_2(z)  & -q_1(z)  \\
	  -q_1(z)  &  -q_2(z)  \end{array} \right),
\end{eqnarray}
which is in canonical form.
The first part of the proof can now be applied to (\ref{MainThrmEval2}) to give $q_2$ constant.

Having established that $q_1$ and $q_2$, and thus $Q$, are constant 
we set $\omega = \frac{1}{2}\arctan\left(\frac{q_2}{q_1}\right)$ in the above transformation, to
give
\begin{equation}
 \tilde{Q} = m\sigma_3
	\quad\mbox{where}\quad m = \sqrt{q_1^2 + q_2^2}.\label{free}
\end{equation}  
Equation (\ref{MainThrmEval2}) with $\tilde{Q}$ as in (\ref{free}),
is the free particle Dirac system studied in \cite[Appendix]{YY}.
Using the fundamental matrix obtained in \cite[Appendix]{YY}, or by direct computation, we
 have that $$\Delta(\lambda) = 2\cos\sqrt{(\lambda^2 - m^2)}\pi.$$ 
Since, by assumption, all instability intervals vanish $|\Delta(\lambda)|\leq2$ for all $\lambda\in\R$. 
In particular $|\Delta(0)|\leq 2$, giving $\cosh(m \pi ) \le 1$ and so $m=0$. Thus $Q = { 0}$. 
\qed

\begin{lem}\label{thrm1.2}
 Let $Q$ be a symmetric matrix with real valued absolutely continuous $\pi$-periodic entries.
 If all instability intervals of (\ref{MainDifferEqn}) vanish then $Q=pI$ where
 $p:=\frac{{\rm trace}(Q)}{2}$.  In this case
$\lambda_{2k-1}=\lambda_{2k}=2k+\frac{1}{\pi}\int_0^\pi p\,dt=  1+\lambda_{2k-1}'=1+\lambda_{2k}',$ for  $k\in\Z.$ 
\end{lem}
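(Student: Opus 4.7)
The plan is to reduce Lemma~\ref{thrm1.2} to Lemma~\ref{thrm1.1} via a gauge-and-shift transformation that turns $Q$ into a traceless (canonical form) potential while absorbing the mean of $\frac{1}{2}\trace Q$ into the spectral parameter. Write $Q = pI + \tilde{Q}$ with $p := \frac{1}{2}\trace Q$ and $\tilde{Q} := Q - pI$. By construction $\tilde{Q}$ is real symmetric, traceless, absolutely continuous and $\pi$-periodic; in particular $\tilde{Q}$ is in canonical form in the sense of (\ref{canonical}), so $J\tilde{Q} = -\tilde{Q}J$. Set
\begin{eqnarray*}
\bar{p} := \frac{1}{\pi}\int_0^\pi p(s)\,ds,\qquad \gamma(z) := \int_0^z (p(s) - \bar{p})\,ds.
\end{eqnarray*}
The mean-zero choice makes $\gamma$ absolutely continuous and $\pi$-periodic on $\R$, with $\gamma(0) = \gamma(\pi) = 0$.

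Now set $V(z) := e^{-\gamma(z)J}Y(z)$. Using $Je^{\gamma J} = e^{\gamma J}J$ together with $\tilde{Q}e^{\gamma J} = e^{-\gamma J}\tilde{Q}$ (an immediate consequence of $J\tilde{Q} = -\tilde{Q}J$), a short computation rewrites (\ref{MainDifferEqn}) as
\begin{eqnarray*}
JV' + \tilde{V}V = \mu V,\qquad \mu := \lambda - \bar{p},\qquad \tilde{V}(z) := e^{-2\gamma(z)J}\tilde{Q}(z).
\end{eqnarray*}
A direct check shows $\tilde{V}$ is real symmetric, traceless, absolutely continuous, and $\pi$-periodic, i.e.\ in canonical form. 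The identities $\gamma(0) = \gamma(\pi) = 0$ give $V(0) = Y(0)$ and $V(\pi) = Y(\pi)$, so $\mathbb{V}(\pi,\mu) = \mathbb{Y}(\pi,\mu+\bar{p})$ and hence $\Delta_V(\mu) = \Delta(\mu + \bar{p})$. The hypothesis that every instability interval of (\ref{MainDifferEqn}) is empty therefore transfers verbatim to the $V$-problem, and Lemma~\ref{thrm1.1} forces $\tilde{V} \equiv 0$. Since $e^{-2\gamma J}$ is invertible this gives $\tilde{Q} \equiv 0$, and so $Q = pI$ as required.

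For the eigenvalue formula, once $\tilde{V} = 0$ the $V$-system reduces to $JV' = \mu V$, with matrix solution $\mathbb{V}(z) = e^{-\mu Jz}$ and $\Delta_V(\mu) = 2\cos\mu\pi$, so
\begin{eqnarray*}
\Delta(\lambda) = 2\cos((\lambda - \bar{p})\pi).
\end{eqnarray*}
Solving $\Delta = \pm 2$ gives a double root $\lambda = k + \bar{p}$ for each $k \in \Z$ (periodic for $k$ even, antiperiodic for $k$ odd); reading off with the indexing (\ref{indexing}) yields $\lambda_{2k-1} = \lambda_{2k} = 2k + \bar{p}$ and $\lambda'_{2k-1} = \lambda'_{2k} = 2k-1+\bar{p} = \lambda_{2k} - 1$, matching the statement. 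The main obstacle is to isolate the correct transformation: the naive substitution $Y = e^{(\int_0^z p)J}W$ would kill the $pI$-part entirely but leaves $W$ with a potential that fails to be $\pi$-periodic whenever $\int_0^\pi p \notin \pi\Z$, so Lemma~\ref{thrm1.1} would not apply. Splitting off the mean $\bar{p}$ into a spectral shift $\mu = \lambda - \bar{p}$ and keeping only the mean-zero fluctuation $p - \bar{p}$ in the gauge is precisely what makes $\tilde{V}$ be $\pi$-periodic while simultaneously enforcing $\gamma(\pi) = 0$, the latter being exactly what preserves the monodromy $\mathbb{V}(\pi) = \mathbb{Y}(\pi)$.
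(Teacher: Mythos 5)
Your proof is correct and follows essentially the same route as the paper: both decompose $Q = pI + \tilde Q$ and use the gauge transformation $Y \mapsto e^{-\gamma J}Y$ with $\gamma(z) = \int_0^z(p - \bar p)\,ds$ (the paper writes this same function in the form $h(z)=\frac{\pi-z}{\pi}\int_0^z p\,dt - \frac{z}{\pi}\int_z^\pi p\,dt$) together with the spectral shift $\mu = \lambda - \bar p$ to reduce to a canonical-form, $\pi$-periodic potential and then invoke Lemma~\ref{thrm1.1}. Your $\tilde V = e^{-2\gamma J}\tilde Q$ and the paper's $e^{-Jh}(Q - pI)e^{Jh}$ coincide by the anticommutation $J\tilde Q = -\tilde Q J$, so the two arguments are the same up to notation.
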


\proof 
 Let $$h(z)=\frac{\pi-z}{\pi}\int_0^z p\,dt
                  -\frac{z}{\pi}\int_z^\pi p\,dt,$$
then $h(0)=0$ and $h(\pi)=0$, so $h$ can be extended to a $\pi$-periodic function on $\R$.
 Let $Y(z)=e^{Jh(z)}X(z)$ then $Y(0)=X(0)$ and $Y(\pi)=X(\pi)$, so the transformation
 preserves boundary conditions.  Here $X(z)$ obeys the equation
 \begin{eqnarray}\label{tx-eq}
   JX'+\tilde{Q}X=\tilde{\lambda} X
 \end{eqnarray}
  where
 \begin{eqnarray}\label{tx-coeff}
   \tilde{Q}&=&e^{-Jh(z)}\left(Q(z)-pI\right)e^{Jh(z)},\\
    \tilde{\lambda}&=&\lambda-\frac{1}{\pi}\int_0^\pi p\,dt,\label{ep}
  \end{eqnarray}
 and $\tilde{Q}$ is a real symmetric matrix valued function with $\pi$-periodic absolutely
 continuous entries. 
 As ${\rm trace}\left(Q(z)-pI\right)=0$ we have
 ${\rm trace}(\tilde{Q})=0$ and $\tilde{Q}$ is in canonical form. 
 In addition the $\tilde{\lambda}$-eigenvalues of (\ref{tx-eq}) 
with periodic and anti-periodic boundary
 conditions are precisely the $\lambda$-eigenvalues of (\ref{MainDifferEqn}) with respectively
 periodic and anti-periodic boundary conditions, but shifted by 
 $-\frac{1}{\pi}\int_0^\pi p\,dt$.  
 If all instability intervals of 
 (\ref{MainDifferEqn}) vanish, so do those of (\ref{tx-eq}).
 Lemma \ref{thrm1.1} can now be applied to (\ref{tx-eq}) to give $\tilde{Q}=0$.
 Hence $Q(z)=pI$, from which the first claim of the lemma follows.
 In this case direct computation gives
 \begin{eqnarray}
   \tilde{\Delta}(\tilde{\lambda}) = 2\cos\tilde{\lambda}\pi,\label{delta-tilde}
 \end{eqnarray}
 where $\tilde{\Delta}$ is the descriminant of (\ref{tx-eq}).
 From Section 4, (\ref{delta-tilde}) and direct computation we see that for $\tilde{Q}=0$, 
$\tilde{\lambda}_{2k-1}=\tilde{\lambda}_{2k}=2k$ and
  by (\ref{indexing}) and (\ref{delta-tilde}), 
$\tilde{\lambda}_{2k-1}'=\tilde{\lambda}_{2k}'=2k-1$, $k\in\Z$, from which
 along with (\ref{ep}) the remaining claims of the lemma follow.
\qed

\begin{lem}\label{lem-converse}
 If $p$ is a real (scalar) valued $\pi$-periodic function which is integrable on compact sets then
 all instability intervals vanish for the equation 
 \begin{eqnarray}
    JY'+pY=\lambda Y.\label{eq-converse}
 \end{eqnarray}
\end{lem}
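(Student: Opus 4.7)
The plan is to integrate the equation $JY' + pY = \lambda Y$ explicitly, exploiting the fact that the coefficient $(p(z) - \lambda)I$ is a scalar multiple of the identity and therefore commutes with every constant matrix, in particular with $J$.

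Multiplying through by $J^{-1} = -J$, the equation becomes $Y'(z) = (p(z) - \lambda)\,J\,Y(z)$, with scalar coefficient $p(z) - \lambda$. Because $(p(t) - \lambda)J$ commutes with $(p(s) - \lambda)J$ for all $s,t$, the Peano--Baker series collapses and the fundamental matrix solution with $\mathbb{Y}(0) = I$ is given explicitly by
\[
\mathbb{Y}(z) = \exp\!\Bigl(J\int_0^z (p(t) - \lambda)\,dt\Bigr) = \exp\bigl(J(P(z) - \lambda z)\bigr),
\]
where $P(z) := \int_0^z p(t)\,dt$. Using $J^2 = -I$ in the power series for the matrix exponential yields the closed form $e^{J\theta} = \cos\theta \cdot I + \sin\theta \cdot J$, so
\[
\mathbb{Y}(\pi) = \cos(P(\pi) - \lambda\pi)\,I + \sin(P(\pi) - \lambda\pi)\,J.
\]

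Taking the trace gives $\Delta(\lambda) = 2\cos(P(\pi) - \lambda\pi)$, which satisfies $|\Delta(\lambda)| \le 2$ for every $\lambda \in \R$. Hence the set $\{\lambda \in \R \,:\, |\Delta(\lambda)| > 2\}$ is empty, and by Corollary~\ref{baw-boundary} every instability interval vanishes. There is no genuine obstacle here: the argument reduces to an explicit calculation, made available by the fact that $pI$ is a scalar matrix and hence commutes with $J$, which is exactly the structural feature that Lemma~\ref{thrm1.2} identifies as producing all-vanishing instability intervals.
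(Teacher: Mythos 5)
Your proof is correct and takes essentially the same approach as the paper: explicitly integrate the scalar-coefficient system to get $\mathbb{Y}(z) = e^{J(\int_0^z p\,dt - \lambda z)}$, take the trace at $z=\pi$ to obtain $\Delta(\lambda) = 2\cos\bigl(\lambda\pi - \int_0^\pi p\,dt\bigr)$, and conclude $|\Delta|\le 2$ everywhere. The paper's proof is a terser version of exactly this computation.
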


\proof
  A direct computation yields that for (\ref{eq-converse}) we have 
  $$\mathbb{Y}(z)=e^{J(\int_0^z p\,dt - \lambda z)}.$$
  Taking the trace of $\mathbb{Y}(\pi)$ gives
  $$\Delta(\lambda)=2\cos\left(\lambda\pi-\int_0^\pi p\,dt\right)$$
  from which it follows that all instability intervals vanish.
\qed

Combining Lemma \ref{thrm1.2} and Lemma \ref{lem-converse} we obtain our main theorem.

\begin{thm}\label{main-thm}
 Let $Q$ be a real symmetric matrix valued function with absolutely continuous $\pi$-periodic
 entries.  All instability intervals of (\ref{MainDifferEqn}) vanish if and only if
 $Q=pI$ for some absolutely continuous real (scalar) valued $\pi$-periodic function $p$.
\end{thm}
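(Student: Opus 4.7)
The plan is to observe that this theorem is essentially a packaging of the two preceding lemmas, one handling each direction of the biconditional, so the proof will be little more than a careful invocation of each.

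For the forward implication, I would assume that $Q$ is real symmetric with absolutely continuous $\pi$-periodic entries and that all instability intervals of (\ref{MainDifferEqn}) vanish. The hypotheses of Lemma \ref{thrm1.2} are then met verbatim, so I can apply it to conclude that $Q = pI$ with $p = \tfrac{1}{2}\,\mathrm{trace}(Q)$. I would note that because the diagonal entries $q_1, q_2$ of $Q$ are absolutely continuous and $\pi$-periodic on $\R$, the same is true of their average $p$, giving exactly the form claimed in the theorem. (As a bonus, Lemma \ref{thrm1.2} also pins down the eigenvalues as $\lambda_{2k-1} = \lambda_{2k} = 2k + \tfrac{1}{\pi}\int_0^\pi p\,dt$ and $\lambda'_{2k-1} = \lambda'_{2k} = 2k-1 + \tfrac{1}{\pi}\int_0^\pi p\,dt$, though this finer information is not needed for the present theorem.)

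For the converse, suppose $Q = pI$ for some absolutely continuous real scalar $\pi$-periodic function $p$. Then (\ref{MainDifferEqn}) reduces precisely to the equation $JY' + pY = \lambda Y$ appearing in Lemma \ref{lem-converse}; absolute continuity of $p$ certainly implies integrability on compact sets, so the hypothesis of that lemma is satisfied. An immediate application yields that all instability intervals of (\ref{MainDifferEqn}) vanish.

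There is no genuine obstacle here: the two implications are separately dispatched by Lemmas \ref{thrm1.2} and \ref{lem-converse}, and the only thing to verify is that the regularity and periodicity hypotheses in the statement of the theorem line up with those required by each lemma, which they do. The substantive analytic work (the entire-function/maximum-modulus argument reducing everything to the case of constant potential in Lemma \ref{thrm1.1}, the canonical form reduction in Lemma \ref{thrm1.2}, and the explicit fundamental matrix computation in Lemma \ref{lem-converse}) has already been carried out, so the proof of Theorem \ref{main-thm} itself should be a short two-sentence argument.
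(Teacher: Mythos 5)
Your proposal is correct and matches the paper exactly: the paper states Theorem \ref{main-thm} immediately after the sentence ``Combining Lemma \ref{thrm1.2} and Lemma \ref{lem-converse} we obtain our main theorem,'' which is precisely the two-lemma invocation you describe. Your additional checks on regularity and periodicity (that $p=\tfrac{1}{2}\mathrm{trace}\,Q$ inherits absolute continuity and $\pi$-periodicity, and that absolute continuity implies local integrability) are correct and make the argument slightly more explicit than the paper's one-line assembly.
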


%%%%%%%%%%%%%%%%% bibliography %%%%%%%%%%%%%%%%%%%%%%

%%%%%%%%%%%%%%%%%%%%%%%%
\end{document}